\newtheorem{theorem}{Theorem}[section]
\newtheorem{corollary}[theorem]{Corollary}
\newtheorem{lemma}[theorem]{Lemma}
\theoremstyle{definition}
\newtheorem{definition}[theorem]{Definition}
\newtheorem{definitions}[theorem]{Definitions}
\newtheorem{example}[theorem]{Example}
\newcommand{\ep}{\varepsilon}
\begin{document}

\title{Shadowable Points for flows}

\author{J. Aponte}
\address{Departamento de Matem\'{a}tica, Universidade Federal do Rio de Janeiro, Rio de Janeiro, Brazil.}
\email{aponteg.jesus@ufrj.br}
\thanks{Partially supported by CAPES from Brazil.}

\author{H. Villavicencio}
\address{Instituto de Matem\'{a}tica y Ciencias Afines, Lima, Per\'{u}.}
\email{hvillavicencio@imca.edu.pe}
\thanks{Partially supported by FONDECYT from Peru (C.G. 217--2014).}

\subjclass[2010]{Primary 37C50; Secondary 37C10}

\keywords{Shadowing, shadowable points, Metric space.}

\begin{abstract}
A shadowable point for a flow is a point where the shadowing lemma holds for
pseudo-orbits passing through it. We prove that this concept satisfies the following properties: the set of shadowable points is invariant and a $G_{\delta}$ set. A flow has the pseudo-orbit tracing property if and only if every point is shadowable. The chain recurrent and nonwandering sets coincide when every chain recurrent point is shadowable. The chain recurrent points which are shadowable are exactly those that can be are approximated by periodic points when the flow is expansive. We study the relations between shadowable points of a homeomorphism and the shadowable points of its suspension flow. We characterize the set of forward shadowable points for transitive flows and chain transitive flows. We prove that the geometric Lorenz attractor does not have shadowable points. We show that in the presence of shadowable points chain transitive flows are transitive and that transitivity is a necessary condition for chain recurrent flows with shadowable points whenever the phase space is connected. Finally, as an application these results we give concise proofs of some well known theorems establishing that flows with POTP admitting some kind of recurrence are minimal. These results extends those presented in \cite{Morales16}.
\end{abstract}
\maketitle
\section{Introduction}

\noindent
The theory of shadowing plays an important role in the qualitative theory of dynamical systems.
It has been largely studied by many researchers and is well documented (see for instance \cite{Palmer2000}). It refers to the general problem of approximating orbits obtained in the presence of noise or round-off error (for instance solutions obtained by numerical computations). There are several ways to define the {\em shadowing property} for flows, see for instance \cite{Pilyugin2008} and references therein. In essence, the central idea among the majority of definitions of shadowing for flows is the following: even if small errors occur at each iteration, one can track the resulting pseudo-orbit by a true orbit with a time reparametrization.

Recently, in \cite{Morales16} the definition of shadowing for homeomorphisms in a compact metric space was generalized by splitting the shadowing property into pointwise shadowings  giving rise to the concept of shadowable points, which are points where the shadowing property holds for pseudo-orbits passing through them. In \cite{Kawaguchi17} the author further extends this notion by introducing the concept of quantitative shadowable points for homeomorphism and some important question made in \cite{Morales16} were answered. In light of these results, it is natural to consider a notion of shadowable points for flows and expect similar results to the homeomorphism case. 

Hence we introduce the concept of shadowable points for flows and we prove that this notion satisfies the following properties: the set of shadowable points is invariant and a $G_{\delta}$ set. A flow has the pseudo-orbit tracing property if and only if every point is shadowable. The chain recurrent and nonwandering sets coincide when every chain recurrent point is shadowable. The chain recurrent points which are shadowable are exactly those that can be are approximated by periodic points when the flow is expansive.  We study the relations between the set of shadowable points of a homeomorphism and the sets of shadowable points of its suspension flow. We characterize the set of forward  shadowable points for transitive flows and chain transitive flows and we prove that the geometric Lorenz attractor does not have shadowable points. We show that in the presence of shadowable points chain transitive flows are transitive and that transitivity is a necessary condition for chain recurrent flows with shadowable points whenever the phase space is connected. Finally, as an application these results we give a concise proof of some well known theorems establishing that flows with POTP admitting some kind of recurrence are minimal.

\section{Statement of results}

Hereafter $(X,d)$ will denote a compact metric space. The closure operation will be denoted by $\overline{(\cdot)}$. A {\em flow} of $X$ is a map $\phi\colon X\times \mathbb{R}\rightarrow X$ satisfying $\phi(x,0)=x$ and $\phi(\phi(x, s),t)=\phi(x, s+t)$ for all $t, s\in \mathbb{R}$ and $x\in X$.
A flow is continuous if it is continuous with respect to the product metric of $X\times \mathbb{R}$. The \emph{time $t$-map} $\phi_t\colon X\rightarrow X$ defined by $\phi_t(x)=\phi(x,t)$ is a
homeomorphism of $X$ for all $t\in \mathbb{R}$. So, the flow $\phi$ can be interpreted as a family of homeomorphisms $\Phi=( \phi_t)_{t\in \mathbb{R}}$ such that
$\phi_0=id$ and $\phi_t\circ \phi_s=\phi_{t+s}$ for all $t,s\in \mathbb{R}$.  Given $A\subset X$ and $I\subset \mathbb{R}$ we set $\phi_I(A)=\{\phi_t(x):(t,x)\in I\times A\}$. If $A$ consists of a single point $x$, then we write
$\phi_I(x)$ instead of $\phi_I(\{x\})$. In particular, $\phi_{\mathbb{R}}(x)$ is called the orbit of $x\in X$ under $\phi$. By a \emph{periodic point} we mean a point $x\in X$ for which there is a minimal $t>0$ satisfying $\phi_{t}(x)=x$. This minimal $t$ is the so-called \emph{period} of $x$ and is denoted by $t_x$. Denote by $Per(\phi)$ the set of periodic points of $\phi$.

Given $\delta$, $T > 0$, $a\in \mathbb{Z}\cup\{-\infty\}$, $b\in\mathbb{Z}\cup\{+\infty\}$ with $a\leq b$, we say that a sequence of pairs $(x_i,\, t_i)_{i=a}^b$ in $X\times\mathbb{R}$ is a $(\delta,\, T)$-pseudo-orbit of $\phi$ if for all integer indexes $i$ such that $a\leq i \leq b-1$ we have that $t_i \geq T$ and
$d(\phi_{t_i}(x_i),\, x_{i+1}) \leq \delta$. If $a,b\in\mathbb{Z}$ and $ab\leq 0$, we say that it is a finite $(\delta,\, T)$-pseudo-orbit. If $a = 0$ and $b=\infty$ we say that it is a forward $(\delta,\,T)$-pseudo-orbit and if $a = 0$ and $b<\infty$ we say that it is a $(\delta,\,T)$-chain. (see \cite{Komuro84, Thomas84}). 

For any sequence of real numbers $(t_j)_{\,j\in \mathbb{Z}}$ we write
    \[s_i =
      \begin{cases}
        \displaystyle{\sum_{j=0}^{i-1}} t_j & i > 0,\\
		    0				             & i = 0, \\
		    -\displaystyle{\sum_{j=i}^{-1}} t_j & i < 0.
      \end{cases}
    \]

Let $(x_i, t_i)_{\,i\in \mathbb{Z}}$ be a $(\delta,\, T)$-pseudo-orbit of $\phi$ and let $t\in \mathbb{R}$, we denote by $x_0\star t$ a point in the $(\delta,\, T)$-pseudo-orbit $t$ {\em units from} $x_0$ \cite{Komuro84}. More precisely,
\[x_0\star t = \phi_{\,t\,-\,s_i}(x_i)\ \mbox{whenever } s_i\leq t < s_{i+1}.\]

Denote by $\mathrm{Rep}$ the set of surjertive strictly increasing maps $ h:\mathbb{R}\rightarrow \mathbb{R}$ such that $h(0)=0$ which will be called the set of reparameterizations.

Next, we recall the definition of pseudo orbit tracing property for flows \cite{Thomas84}.

\begin{definition}\label{defi01}
A flow $\phi$ on X is said to have the pseudo orbit tracing property, {\em POTP}, if for all $\ep>0$ there exists $\delta>0$ such that every  $(\delta,\,1)$-pseudo-orbit is $\ep$-shadowed by an orbit of $\phi$.
\end{definition}

Additionally, we recall the definition of {\em shadowable points} for homeomorphisms \cite{Morales16}. Let $f\colon X\to X$ a homeomorphism. Given $\delta>0$, we say that a bi-infinite
sequence $(x_n)_{n\in\mathbb{Z}}$ is a $\delta$-pseudo-orbit passing through the point $p\in X$, if $x_0=p$ and for every integer $n$ we have that $d(f(x_n),\,x_{n+1})\leq\delta$. We say $p$ is \emph{shadowable} if for each $\ep>0$ there exists a $\delta>0$  such that for every $\delta$-pseudo-orbit $(x_n)_{n\in\mathbb{Z}}$ passing through $p$, there is a point $q\in X$ such that $d(f^n(q),\,x_n)\leq\ep$ for all $n\in\mathbb{Z}$. The set of shadowable points of $f$ is denoted by $Sh(f)$.

Motivated by this we consider the notion of shadowing for pseudo orbits in flows passing through a given point.

\begin{definitions}\label{defi1}
Given positive numbers $\delta,\, T$ and $\ep$, we say that a $(\delta,\, T)$-pseudo-orbit $(x_i,\, t_i)_{i\in \mathbb{Z}}$ of $\phi$ passes through $p$ if $x_0=p$, and we say that is {\em $\ep$-shadowed}  if there are a point $y\in X$ and a function $h\in \mathrm{Rep}$ such that
$$d(x_0\star t,\, \phi_{h(t)}(y)) \leq \ep, \ \mbox{for each}\ t\in \mathbb{R}.$$
\end{definitions}

Now we introduce the main objects of study.

\begin{definition}\label{defi3}
A point $p\in X$ is \emph{shadowable with respect to the parameter $T>0$}, if for every $\ep > 0$, there exists $\delta > 0$ such that every $(\delta,\, T)$-pseudo-orbit of $\phi$ passing through $p$ can be $\ep$-shadowed. When $p$ is shadowable with respect to the parameter $T=1$ we say that $p$ is \emph{shadowable}.
\end{definition}
  
We denote by $Sh(\phi)$ the set of shadowable  points of $\phi$ in $X$. In what follows we will give some examples of shadowable points.

\begin{example}
If a flow $\phi$ on $X$ has the {\em POTP} then $Sh(\phi)= X$. The converse is also true as we will see shortly.
\end{example}

Our first result deals with the basic properties of shadowable points related to the following standard definitions. We say that a point $p\in X$ is \emph{non-wandering} if for every neighborhood $U$ of $p$ and every $T\in\mathbb{R}$ there is $t\geq T$ such that $\phi_{\,t}(U)\cap U\neq\emptyset$. Two points $p$ and $q$ are $(\delta,\,T)$-related if there are two  $(\delta,\,T)$-chains $(x_i,\,t_i)_{i=0}^m$ and  $(y_i,\,s_i)_{i=0}^n$ such that $p=x_0=y_n$ and $y=y_0=x_m$. We say that $p$ and $q$ are related (written $p \sim q$) if they are $(\delta,\,T)$-related for every $\delta,\,T >0$. A point $p$ is \emph{chain recurrent} if $p\sim p$. Denote by $\Omega(\phi)$ and $CR(\phi)$ the set of non-wandering and chain recurrent points of $\phi$ respectively. Clearly $\Omega(\phi)\subseteq CR(\phi)$ and the inclusion may be proper \cite{Alongi07}.
 
We say that a flow $\phi$ is {\em expansive} if for every $\varepsilon>0$ there exists $\delta>0$ with the property that if $d(\phi_t(x),\, \phi_{h(t)}(y))\leq \delta$ for all $t\in \mathbb{R}$, for every pair of points $x,y\in X$ and some $h\in \mathrm{Rep}$, then $y\in\phi_{(-\varepsilon,\,\varepsilon)}(x)$.  
 
A subset $\Lambda$ of $X$ is invariant under $\phi$ (or $\phi$-invariant) if  $\phi_t(\Lambda)= \Lambda$ for every $t\in \mathbb{R}$.  An \emph{equivalence} between continuous flows \cite{Thomas84}, $\phi$ on $X$ and $\psi$ on another metric space $Y$, is an homeomorphism $f:X\rightarrow Y$ carrying orbits of $\phi$ onto orbits of $\psi$ such that for every $x\in X$ there exists $h_x\in \mathrm{Rep}$ depending continuously on $x$ and satisfying 
$$f^{-1}(\psi(f(x),\,t))=\phi(x,\,h_x(t)) \ \mbox{for every}\  t\in \mathbb{R}.$$
In this case we say that the flows are \emph{equivalent}.

With these definitions we can state our first result.

\begin{theorem}\label{teo4}
Given a flow $\phi$ in a compact metric space $(X,\, d)$, the set of shadowable points satisfies the following properties:
\begin{enumerate}[(a)]
\item $Sh(\phi)$ is invariant possibly empty and noncompact.
\item The flow $\phi$  has the  POTP if and only if $Sh(\phi) = X$.
\item If $CR(\phi)\subseteq Sh(\phi)$ then $CR(\phi) = \Omega(\phi)$.
\item If $\phi$ is expansive and $CR(\phi)\subseteq Sh(\phi)$, then $CR(\phi)=\overline{Per(\phi)}$.
\item If $f$ is an equivalence between $\phi$ and $\psi$, then $f(Sh(\phi)) = Sh(\psi)$.
\end{enumerate}
\end{theorem}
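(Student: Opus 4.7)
The five assertions each reduce to the definitions of shadowable point together with standard compactness and concatenation arguments; I would tackle them in order.

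For part (a), invariance is the substantive content: given $p \in Sh(\phi)$ and $s \in \mathbb{R}$, I would show $\phi_s(p) \in Sh(\phi)$ by taking an arbitrary $(\delta,1)$-pseudo-orbit through $\phi_s(p)$ and prepending a finite initial segment starting at $p$, producing a pseudo-orbit passing through $p$ whose shadow (from shadowability at $p$) yields, after a time shift of the reparametrization, an $\varepsilon$-shadow of the original. For the remaining clauses, the Lorenz example (treated later in the paper) supplies a flow with $Sh(\phi)=\emptyset$, while examples adapted from the homeomorphism setting in \cite{Morales16} exhibit an $Sh(\phi)$ which is not closed, hence noncompact since $X$ is compact.

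For part (b), the forward direction is immediate from the definition of POTP. For the converse, fix $\varepsilon > 0$; for each $p \in X$ use shadowability at $p$ to obtain $\delta(p) > 0$ such that every $(\delta(p),1)$-pseudo-orbit through $p$ is $\varepsilon/2$-shadowed, and, via continuity of $\phi_1$, an $\eta(p) > 0$ so that replacing $x_0$ by $p$ in any pseudo-orbit with $x_0 \in B(p,\eta(p))$ preserves the pseudo-orbit property for a slightly larger $\delta$. Compactness of $X$ yields a finite subcover of the $B(p_j,\eta(p_j))$ and hence a uniform $\delta$ that works for every pseudo-orbit.

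For parts (c) and (d) the engine is identical: from chain recurrence of $p$, concatenate a $(\delta,T)$-chain from $p$ to $p$ with itself to build a periodic bi-infinite $(\delta,T)$-pseudo-orbit through $p$; shadowability at $p$ then furnishes a true orbit $q$ tracking this pseudo-orbit within $\varepsilon$ via some $h \in \mathrm{Rep}$. For (c), choosing $\varepsilon$ with $B(p,\varepsilon) \subset U$ and $T \geq T_0$ places $p \in \Omega(\phi)$, since the reparametrized iterates of $q$ corresponding to the returns of the periodic pseudo-orbit lie in $B(p,\varepsilon)$ at arbitrarily large times. For (d), expansivity applied to $q$ and the orbit that re-shadows the same pseudo-orbit after one full period forces $q$ to lie on the orbit of a periodic point of $\phi$, yielding an element of $Per(\phi)$ within $\varepsilon$ of $p$.

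For part (e), the equivalence $f$ transports pseudo-orbits of $\phi$ to pseudo-orbits of $\psi$ through the continuously varying family $h_x$: setting $y_i = f(x_i)$ and $s_i = h_{x_i}(t_i)$, a $(\delta,T)$-pseudo-orbit of $\phi$ through $p$ becomes a $(\delta',T')$-pseudo-orbit of $\psi$ through $f(p)$ with $\delta',T'$ controlled by uniform continuity of $f$ and by compactness-based bounds on the $h_x$; shadowing in $\psi$ then transfers back using $f^{-1}$, and the reverse inclusion is symmetric. The main obstacle running through the proof lies in (b) and (d): because the flow-theoretic definition of $\varepsilon$-shadowing permits an arbitrary reparametrization $h$, piecing local shadowing reparametrizations at different base points into one uniform behavior (for (b)) and extracting genuine periodicity of the shadowing orbit from a periodically repeating pseudo-orbit via expansivity (for (d)) require delicate bookkeeping of the reparametrizations.
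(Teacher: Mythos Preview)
Your outlines for (c), (d), and (e) match the paper's arguments closely (the paper packages (c) and (d) as separate lemmas, with (d) invoking an auxiliary estimate to bound the reparametrization away from zero, exactly the obstacle you flag). Your direct finite-subcover argument for (b) is a legitimate alternative to the paper's contradiction argument, but note one imprecision: you invoke ``continuity of $\phi_1$'' to control $d(\phi_{t_0}(p),\phi_{t_0}(x_0))$, yet $t_0$ is only known to satisfy $t_0\geq 1$, so a priori it is unbounded. Both your approach and the paper's require first reducing to $(\delta,1,2)$-pseudo-orbits (a standard lemma of Thomas, reproved in the paper) so that uniform continuity of $\phi$ on $X\times[1,2]$ applies.

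Your approach to (a), however, has a genuine gap. ``Prepending a finite initial segment starting at $p$'' does not work for a \emph{bi-infinite} $(\delta,1)$-pseudo-orbit $(x_i,t_i)_{i\in\mathbb{Z}}$ through $\phi_s(p)$: inserting a segment from $p$ to $\phi_s(p)$ before index $0$ destroys the connection to $x_{-1}$, since the pseudo-orbit condition gives only $d(\phi_{t_{-1}}(x_{-1}),\phi_s(p))\leq\delta$, not $d(\phi_{t_{-1}}(x_{-1}),p)\leq\delta$. Replacing the entire negative tail by the exact backward orbit of $p$ lets you shadow only the non-negative half of the original pseudo-orbit; adjusting times (e.g.\ $t_{-1}\mapsto t_{-1}-s$) runs into $t_{-1}-s<1$, and none of these tricks handles $s<0$. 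The paper instead \emph{conjugates} the whole pseudo-orbit by $\phi_{-s}$: since $\phi_{t_i}\circ\phi_{-s}=\phi_{-s}\circ\phi_{t_i}$, uniform continuity of $\phi_{-s}$ makes $(\phi_{-s}(x_i),t_i)_{i\in\mathbb{Z}}$ a $(\delta,1)$-pseudo-orbit through $p$, and if $(y,h)$ shadows it then $(\phi_s(y),h)$ shadows the original by uniform continuity of $\phi_s$. This handles all $s\in\mathbb{R}$ in one stroke.
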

Next, we give an example related to Theorem \ref{teo4}. We recall that a flow $\phi$ on $X$ is isometric if $d(\phi_t(x),\,\phi_t(y)) = d(x,\,y)$ for every $x,\,y\in X$ and each $t\in \mathbb{R}$, and  is minimal is all of its orbits are dense in $X$.

\begin{example}
If $\phi$ and $\psi$ are continuous flows on $X$, then it is not always true that $Sh(\phi)\times Sh(\psi)\subset Sh(\phi\times \psi)$. Indeed, if we consider  $\phi(z,\,t) = e^{2t\pi i}z$, defined in the unit circle $S^1$, this flow has the {\em POTP}. Then $Sh(\phi)=S^1$ by item $(b)$ of Theorem \ref{teo4}. If the inclusion holds then $Sh(\phi\times \phi)= S^1\times S^1$. Again, by item $(b)$ of Theorem \ref{teo4},  $\phi\times\phi$ would have the {\em POTP}. However  this is not possible because this flow is isometric and is not minimal \cite{Komuro84}. 
\end{example}


Next, we study the relations between the set of shadowable points of a homeomorphism and the sets of shadowable points of its suspension flow.

Let $f:X\to X$ be a homeomorphism and $\tau:X\to(0,+\infty)$ be a continuous function. Consider the quotient space
$X^{\tau,f}=\{(x,t):0\leq t\leq \tau(x), x\in X\}/\sim$, where $(x,\tau(x))\sim(f(x),0)$ for all $x\in X$. The {\em suspension flow over $f$ with height function $\tau$} is the flow on $X^{\tau,f}$ defined by $\phi^{\tau,f}_t(x,s)= (x,s+t)$ whenever $s+t\in [0,\tau(x)]$. Replacing $d$ by the the equivalent metric if necessary, we can assume that $diam(X)=1$. Then, there is a natural metric $d^{\tau,f}$ on $X^{\tau,f}$ making it a compact metric space (this is the so-called {\em Bowen-Walters metric}, see \cite{Bw}).

Every suspension of $f$ is conjugate to the suspension of $f$ under the constant function $1$. A homeomorphism from $X^{1,f}$ to $X^{\tau,f}$ that conjugates the flows is given by the map $(x,t)\mapsto (x,t\tau(x))$.

\begin{theorem}\label{suspe1}
	If $\phi^{\tau,f}$ is the suspension flow of a homeomorphism $f$ on $X$ under a continuous map $\tau:X\rightarrow(0,+\infty)$, then
	$$Sh(\phi^{\tau,f})=(Sh(f)\times [0,1])/\sim.$$
\end{theorem}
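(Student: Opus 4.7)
The plan is to first reduce to the case $\tau\equiv 1$ via the equivalence $\Theta:X^{1,f}\to X^{\tau,f}$, $\Theta(x,t)=(x,t\tau(x))$, noted in the excerpt. By Theorem~\ref{teo4}(e) this carries $Sh(\phi^{1,f})$ bijectively onto $Sh(\phi^{\tau,f})$, so I only need to establish $Sh(\phi^{1,f})=(Sh(f)\times[0,1])/\sim$. Both sides are $\phi^{1,f}$-invariant (left by Theorem~\ref{teo4}(a); right because $Sh(f)$ is $f$-invariant), and every orbit of $\phi^{1,f}$ meets $X\times\{0\}$, so it suffices to prove $(p,0)\in Sh(\phi^{1,f})$ if and only if $p\in Sh(f)$.

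For $p\in Sh(f)\Rightarrow(p,0)\in Sh(\phi^{1,f})$, fix $\varepsilon>0$. I would choose $\varepsilon_1>0$ small (via the Bowen--Walters metric) so that $\varepsilon_1$-closeness on the section $X\times\{0\}$ translates to $\varepsilon$-closeness in $X^{1,f}$, let $\delta_1$ be the shadowing constant of $p$ for $f$ at scale $\varepsilon_1$, and then pick $\delta\ll\delta_1$. Given a $(\delta,1)$-pseudo-orbit $((y_i,u_i),t_i)_{i\in\mathbb Z}$ of $\phi^{1,f}$ through $(p,0)$, I extract an $f$-pseudo-orbit $(z_n)_{n\in\mathbb Z}$ through $p$ by sampling at integer times: within each block corresponding to $t_i$ the sequence is given by iterated $f$-applications to $y_i$, and the errors at the block boundaries are controlled by $\delta$ through the definition of the Bowen--Walters metric. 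Shadowability of $p$ yields $q\in X$ with $d(f^n(q),z_n)<\varepsilon_1$, and $(q,0)$ together with an explicit reparameterization $h\in\mathrm{Rep}$ built from the fractional parts of the partial sums $s_i$ is then shown to $\varepsilon$-shadow the given flow pseudo-orbit.

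For $(p,0)\in Sh(\phi^{1,f})\Rightarrow p\in Sh(f)$, fix $\varepsilon>0$ and choose $\varepsilon_1<1/2$ small enough that, in the Bowen--Walters metric, any point within $\varepsilon_1$ of $(x,0)$ has the form $(x',u')$ with either $u'$ near $0$ and $d(x,x')<\varepsilon$, or $u'$ near $1$ and $d(x,f(x'))<\varepsilon$. Let $\delta_1$ come from flow-shadowability of $(p,0)$ at scale $\varepsilon_1$, and pick $\delta$ so that $d(x,y)<\delta$ implies $d^{1,f}((x,0),(y,0))<\delta_1$. Given an $f$-$\delta$-pseudo-orbit $(x_n)$ with $x_0=p$, I form the $(\delta_1,1)$-flow pseudo-orbit with nodes $(x_n,0)$ and constant step $1$; flow-shadowing produces $(q_0,r_0)$ and $h\in\mathrm{Rep}$ such that $\phi^{1,f}_{h(n)}(q_0,r_0)=(f^{k_n}(q_0),u_n)$ lies within $\varepsilon_1$ of $(x_n,0)$ for every $n$. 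The hard step will be to show $k_{n+1}-k_n=1$ for all $n$: the bound $\varepsilon_1<1/2$ should prevent $h$ from skipping or duplicating a whole fiber between consecutive nodes, so a single base point $q=f^{-k_0}(q_0)$ then gives $d(f^n(q),x_n)<\varepsilon$ simultaneously for all $n$. This bookkeeping around the reparameterization is the main obstacle; the remaining steps are routine translations of pseudo-orbits between the continuous and discrete settings together with standard Bowen--Walters metric estimates.
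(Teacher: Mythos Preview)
Your outline is correct and follows the same strategy as the paper: reduce to $\tau\equiv1$, use invariance of $Sh(\phi^{1,f})$ to fix a fiber height, and pass between $f$- and $\phi^{1,f}$-pseudo-orbits via Bowen--Walters estimates, the only substantive step being control of the reparametrization across fibers. Two implementation choices in the paper ease exactly the bookkeeping you flag as the obstacle: it works at height $\tfrac12$ rather than $0$ (so proximity to $(x_n,\tfrac12)$ forces the shadowing point's height into $(\tfrac14,\tfrac34)$, eliminating your near-$0$/near-$1$ dichotomy and letting a single $\tfrac14$-bound pin down the integer increment), and in the direction $Sh(f)\Rightarrow Sh(\phi^{1,f})$ it takes $(\delta',2,4)$-pseudo-orbits (via Lemma~\ref{lemma2} and \cite{Thomas84}) so that each block is guaranteed to contribute at least one $f$-iterate when extracting the discrete pseudo-orbit.
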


With this theorem we have the following example of a flow whose shadowable set is non closed.

\begin{example}\label{examplechivo}
	Let $C$ be the usual ternary Cantor set in $[0,\,1]$ and $X = C \cup [1,\,2]$ with the usual metric of $\mathbb{R}$. Let $id\colon X\to X$ be the identity map of $X$. Example 2.1 in \cite{Morales16} shows that $Sh(id) = C\setminus\{1\}$, so by Theorem \ref{suspe1} $Sh(\phi^{1,f}) = (C\setminus\{1\})\times[0,1]/\sim$ which is a   proper subset non closed of $X^{1,f}$. 
\end{example}

Next, we will study the topological behavior of the shadowable points of $X$. We shall use the following standard topological concept. A subset of $X$ is a $G_\delta$ set if it is a countable intersection of open sets of $X$.
In \cite{Morales16},  examples of homeomorphisms where the set of shadowable points is a $G_\delta$ sets are given. We prove that this is always the case in the flow context:

\begin{theorem}\label{teo2.5}
The set of shadowable points of $\phi$ is a $G_{\delta}$ set of $X$.
\end{theorem}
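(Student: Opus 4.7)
The plan is to express $Sh(\phi)$ as a countable intersection of open sets via a standard truncation trick. For each integer $n \geq 1$, let $W_n$ denote the set of points $p \in X$ admitting some $\delta > 0$ such that every $(\delta, 1)$-pseudo-orbit through $p$ is $(1/n)$-shadowed. By Definition \ref{defi3} we have $Sh(\phi) = \bigcap_{n \geq 1} W_n$ directly. The theorem then follows from the inclusion $W_{n+1} \subseteq \mathrm{int}(W_n)$ for every $n$, since this yields $Sh(\phi) = \bigcap_n W_{n+1} \subseteq \bigcap_n \mathrm{int}(W_n) \subseteq \bigcap_n W_n = Sh(\phi)$, exhibiting $Sh(\phi)$ as a $G_\delta$ set.

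To prove the inclusion we fix $p \in W_{n+1}$ with witness $\delta > 0$, and use uniform continuity of $\phi$ on the compact set $X \times [0, 2]$ to pick $\eta \in (0, \delta/2)$ such that $d(u, v) < \eta$ and $t \in [0, 2]$ imply $d(\phi_t(u), \phi_t(v)) < \min\bigl(\tfrac{1}{n(n+1)}, \tfrac{\delta}{2}\bigr)$. The claim will be that $B(p, \eta) \subseteq W_n$, with $\delta_q := \eta/2$ serving as the witness for each $q$ in this ball.

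Given a $(\delta_q, 1)$-pseudo-orbit $(x_i, t_i)_{i \in \mathbb{Z}}$ through $q$, the key technical device is to \emph{refine} it so that every time-gap lies in $[1, 2)$: each $t_i \geq 2$ is decomposed as $t_i = 1 + \cdots + 1 + (1 + r_i)$ with $r_i \in [0, 1)$, and the true-orbit points $\phi_1(x_i), \phi_2(x_i), \ldots$ are inserted as intermediate pseudo-orbit points, leaving the trajectory $q \star t$ unchanged and preserving the error bound $\delta_q$. Reindexing so that $q$ sits at position $0$ in the refined sequence $(y_j, u_j)$ and then replacing $y_0 = q$ by $p$ affects only the two errors adjacent to index $0$; these become at most $\delta_q + \eta$ and $\delta_q + \tfrac{\delta}{2}$ respectively (using $d(p,q) < \eta$ and the equicontinuity bound, since $u_0 \in [1, 2)$), and both are below $\delta$. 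Hence the modified sequence is a $(\delta, 1)$-pseudo-orbit through $p$, so by hypothesis it is $(1/(n+1))$-shadowed by some $z \in X$ via a reparametrization $h$.

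Finally, the modified and refined trajectories coincide for $t \notin [0, u_0)$, so the $(1/(n+1))$-shadowing transfers there at once. On the remaining interval $[0, u_0) \subseteq [0, 2)$ the triangle inequality gives
$$d(\phi_t(q), \phi_{h(t)}(z)) \leq d(\phi_t(q), \phi_t(p)) + \tfrac{1}{n+1} \leq \tfrac{1}{n(n+1)} + \tfrac{1}{n+1} = \tfrac{1}{n},$$
so the original pseudo-orbit through $q$ is $(1/n)$-shadowed and hence $q \in W_n$. The principal obstacle overcome here is that the $t_i$'s in a flow pseudo-orbit can be arbitrarily large, so one cannot naively substitute $q \to p$ and use continuity of a single time-$t$ map; the refinement step is precisely what forces the relevant time-gaps into the compact window $[1, 2)$ where equicontinuity of $\phi$ applies.
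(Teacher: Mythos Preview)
Your proof is correct and follows essentially the same strategy as the paper: both establish that if a point has $\ep$-shadowing then an entire neighborhood of it has $2\ep$-shadowing (your inclusion $W_{n+1}\subseteq\mathrm{int}(W_n)$ is precisely the paper's Lemma~\ref{lemma3.7} applied to the singleton $K=\{p\}$), and then express $Sh(\phi)$ as the intersection of the resulting open sets. Your argument is slightly more self-contained in that you carry out the refinement to time-gaps in $[1,2)$ explicitly inline, whereas the paper defers the neighborhood-propagation step to Lemma~\ref{lemma3.7} and handles the bounded-time issue there less transparently.
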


Returning to the case of homeomorphisms, in \cite{Kawaguchi17}  Kawaguchi proved that the set of shadowable points of a homeomorphism is a  Borel set. But what he proved indeed is that such a set is a $F_{\sigma\delta}$ set of phase space, i.e., a countable intersection of countable union of closed sets. By making use of Theorem \ref{teo2.5}, we improve Kawaguchi's assertion by proving that the set of shadowable points is a $G_{\delta}$ set of the phase space:

\begin{corollary}\label{coroG}
The set of shadowable points of a homeomorphism $f\colon X\to X$ on a compact metric space $X$ is a $G_\delta$ set of $X$.
\end{corollary}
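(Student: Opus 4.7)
The plan is to deduce this corollary from Theorem \ref{suspe1} and Theorem \ref{teo2.5} by pulling back the $G_{\delta}$ structure from the suspension flow via a continuous section.

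First, I would form the suspension flow $\phi^{1,f}$ of $f$ with height function $\tau \equiv 1$ on the compact metric space $X^{1,f}$, and apply Theorem \ref{teo2.5} to this flow to conclude that $Sh(\phi^{1,f})$ is a $G_{\delta}$ subset of $X^{1,f}$. Next, I would introduce the continuous embedding $\iota\colon X \to X^{1,f}$ given by $\iota(x) = [x,0]$ (the base section of the suspension). Since $0$ lies strictly below $\tau(x)=1$ and is not on the identification boundary when we represent $\iota(x)$ by the pair $(x,0)$, the map $\iota$ is well defined and continuous with respect to the Bowen–Walters metric $d^{1,f}$.

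The key step is to verify the set-theoretic equality $\iota^{-1}(Sh(\phi^{1,f})) = Sh(f)$. By Theorem \ref{suspe1}, we have $Sh(\phi^{1,f}) = (Sh(f)\times[0,1])/\sim$, so a class $[x,0]$ belongs to $Sh(\phi^{1,f})$ if and only if some representative lies in $Sh(f)\times[0,1]$. Using the identification $(y,1)\sim(f(y),0)$ together with the invariance $f(Sh(f)) = Sh(f)$ from Theorem \ref{teo4}(a), the only representatives of $[x,0]$ of the form $(\cdot,0)$ or $(\cdot,1)$ are $(x,0)$ and $(f^{-1}(x),1)$, and both lie in $Sh(f)\times[0,1]$ precisely when $x \in Sh(f)$. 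This gives the desired equality.

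The conclusion is then immediate: since the preimage of a $G_{\delta}$ set under a continuous map is a $G_{\delta}$ set, $Sh(f) = \iota^{-1}(Sh(\phi^{1,f}))$ is a $G_{\delta}$ set of $X$. The only mildly delicate point in the argument is the bookkeeping at the identification boundary in the second paragraph, but this is handled cleanly by the invariance of $Sh(f)$ under $f$.
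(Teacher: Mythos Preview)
Your argument is correct and uses the same two ingredients as the paper (Theorem~\ref{suspe1} and Theorem~\ref{teo2.5}), but the extraction of the conclusion is genuinely cleaner than the paper's. The paper pulls back through the quotient map $p\colon X\times[0,1]\to X^{1,f}$, obtaining $Sh(f)\times[0,1]=\bigcap_n p^{-1}(A_n)$, and then runs a tube-lemma style argument: for each $z\in Sh(f)$ and each $n$ it finds a ball $B(z,\varepsilon_{z,n})$ with $B(z,\varepsilon_{z,n})\times[0,1]\subset p^{-1}(A_n)$, and sets $V_n=\bigcup_{z\in Sh(f)}B(z,\varepsilon_{z,n})$. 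Your use of the continuous section $\iota(x)=[x,0]$ bypasses all of this, since the preimage of a $G_\delta$ set under a continuous map is $G_\delta$; the only price is the boundary bookkeeping, which you handle correctly via $f$-invariance of $Sh(f)$.

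Two minor remarks. First, Theorem~\ref{teo4}(a) concerns invariance of $Sh(\phi)$ for \emph{flows}; the invariance $f(Sh(f))=Sh(f)$ for homeomorphisms is the analogous fact from \cite{Morales16} (the paper's own proof invokes it without citation as well). Second, you can sidestep the boundary discussion entirely by choosing the section $\iota(x)=[x,\tfrac12]$ instead: the class $[x,\tfrac12]$ has the unique representative $(x,\tfrac12)$, so $\iota^{-1}(Sh(\phi^{1,f}))=Sh(f)$ follows directly from Theorem~\ref{suspe1} with no appeal to invariance.
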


We denote by $Sh^+(\phi)$ the set of points such that given $\ep > 0$ there exists $\delta > 0$ such that every forward $(\delta,\,1)$-pseudo-orbit $(x_i,\,t_i)_{i=0}^\infty$ passing through $p$ can be $\ep$-shadowed. Each element of $Sh^+(\phi)$ is said forward shadowable point. Clearly, we have the relation $Sh(\phi) \subseteq Sh^+(\phi)$.

We recall that a chain transitive flow $\phi$ is one where $X$ is a chain transitive set. That is, for every $p,\,q\in X$ we have $p\sim q$, see \cite{Alongi07}. The following result  is a partial analogous of Theorem 1.1 in \cite{Kawaguchi17}  and characterizes the set of forward shadowable points for chain transitive flows.

\begin{theorem}\label{teo2.9}
If the flow $\phi$ is chain transitive then $Sh^+(\phi)= X$ or $Sh^+(\phi) = \emptyset$.
\end{theorem}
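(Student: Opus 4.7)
The plan is to establish the dichotomy by showing that $Sh^+(\phi)$ is ``saturated'' under chain transitivity: if any single point is forward shadowable, every point of $X$ must be. So I would assume $Sh^+(\phi) \neq \emptyset$, fix some $p \in Sh^+(\phi)$, and take an arbitrary $q \in X$ with an arbitrary $\ep > 0$. The goal is to produce a $\delta > 0$ such that every forward $(\delta, 1)$-pseudo-orbit through $q$ is $\ep$-shadowed.

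The natural candidate for $\delta$ is the one associated to $p$ and $\ep$ by the forward shadowability of $p$. To exploit this, given any forward $(\delta, 1)$-pseudo-orbit $(x_i, t_i)_{i \geq 0}$ through $q$, chain transitivity supplies (from $p \sim q$ with the parameters $(\delta, 1)$) a $(\delta, 1)$-chain $(y_i, s_i)_{i=0}^m$ with $y_0 = p$ and $y_m = q = x_0$. I would then concatenate these two sequences by setting
$$(z_i, r_i) = (y_i, s_i) \text{ for } 0 \leq i \leq m-1, \qquad (z_{m+j}, r_{m+j}) = (x_j, t_j) \text{ for } j \geq 0.$$
Because $y_m = x_0$, every consecutive $\delta$-closeness condition of the concatenation reduces to one coming either from the chain or from the original pseudo-orbit, so $(z_i, r_i)_{i \geq 0}$ is a genuine forward $(\delta, 1)$-pseudo-orbit through $p$. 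By the choice of $\delta$ it is $\ep$-shadowed: there exist $z \in X$ and $h \in \mathrm{Rep}$ with $d(z_0 \star t, \phi_{h(t)}(z)) \leq \ep$ for every $t \geq 0$.

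To transfer this to $q$, set $S := s_0 + \cdots + s_{m-1}$, the cumulative time at which the concatenated pseudo-orbit reaches $x_0 = q$. From the definition of the $\star$ notation one checks that $z_0 \star t = x_0 \star (t - S)$ whenever $t \geq S$. Substituting $\tau = t - S$, putting $z' := \phi_{h(S)}(z)$, and defining $h'(\tau) := h(\tau + S) - h(S)$ (which belongs to $\mathrm{Rep}$ because $h$ does), the flow property $\phi_{h(t)}(z) = \phi_{h'(\tau)}(z')$ yields
$$d(x_0 \star \tau, \phi_{h'(\tau)}(z')) \leq \ep \quad \text{for all } \tau \geq 0,$$
which shows the original pseudo-orbit through $q$ is $\ep$-shadowed. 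Hence $q \in Sh^+(\phi)$ and, since $q$ was arbitrary, $Sh^+(\phi) = X$.

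The only real subtlety I foresee is the reparametrization bookkeeping at the last step: one must check that the shift $\tau \mapsto h(\tau+S)-h(S)$ lies in $\mathrm{Rep}$ and that the ``starting'' orbit is genuinely $\phi_{h(S)}(z)$ rather than $z$ itself. Once this is confirmed, the rest of the argument is a conceptually clean combination of forward shadowability of $p$ with the ability, granted by chain transitivity, to prepend an arbitrarily fine chain linking $p$ to $q$.
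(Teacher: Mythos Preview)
Your proposal is correct and follows essentially the same approach as the paper: both arguments concatenate a $(\delta,1)$-chain from $p$ to $q$ in front of the given forward pseudo-orbit through $q$, apply the forward shadowability of $p$ to the concatenation, and then shift the reparametrization by the cumulative time $S=\hat r_m$ so that the shadowing point becomes $\phi_{h(S)}(z)$ with the new reparametrization $h'(\tau)=h(\tau+S)-h(S)$. The subtlety you flag about $h'\in\mathrm{Rep}$ is exactly the one the paper handles, and your verification that $z_0\star t = x_0\star(t-S)$ for $t\geq S$ corresponds to the paper's computation $\hat r_{m+k}-\hat r_m=\hat t_k$.
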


As a first consequence of the previous theorem, using suspensions we obtain an alternate proof of the following result, which follows from the fact that a map a chain transitive if and only if its suspension flow is (see \cite{Alongi07}): 

\begin{corollary}[Kawaguchi, N., \cite{Kawaguchi17}]
Every chain transitive homeomorphism either has the POTP or has no shadowable points.
\end{corollary}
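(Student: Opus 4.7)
The plan is to lift the corollary to the flow setting via suspensions and then invoke Theorem~\ref{teo2.9}. Let $f\colon X\to X$ be a chain transitive homeomorphism and let $\phi=\phi^{1,f}$ be its suspension flow on $X^{1,f}$; by the cited result from \cite{Alongi07}, $\phi$ is chain transitive as well. If $Sh(f)=\emptyset$ the statement holds trivially, so suppose $p\in Sh(f)$. Theorem~\ref{suspe1} gives $(p,0)\in Sh(\phi)\subseteq Sh^+(\phi)$, so $Sh^+(\phi)$ is nonempty, and Theorem~\ref{teo2.9} then forces $Sh^+(\phi)=X^{1,f}$.

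The next step is to upgrade forward shadowability to full bi-infinite shadowability on $\phi$. One route is to apply Theorem~\ref{teo2.9} to the reverse flow $t\mapsto\phi_{-t}$, which is also chain transitive and admits $(p,0)$ as a forward shadowable point, yielding backward shadowability at every point of $X^{1,f}$. A second route is to argue directly: given a bi-infinite $(\delta,1)$-pseudo-orbit through $q\in X^{1,f}$, use chain transitivity to splice the backward tail into a forward pseudo-orbit starting at $q$ (routing through the shadowable point $(p,0)$), trace it via Theorem~\ref{teo2.9}, and extract a bi-infinite shadowing orbit by a compactness/diagonal argument. Either route gives $Sh(\phi)=X^{1,f}$.

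Once this equality is established, Theorem~\ref{suspe1} immediately yields $Sh(f)=X$, and the homeomorphism analog of Theorem~\ref{teo4}(b) proved in \cite{Morales16} concludes that $f$ has the POTP. The main obstacle is the bridge from $Sh^+(\phi)=X^{1,f}$ to $Sh(\phi)=X^{1,f}$: combining one-sided tracings into a single two-sided shadowing orbit is not automatic, and requires either time-reversal symmetry of the chain transitivity hypothesis or a compactness argument that uses chain transitivity to route the backward half of a bi-infinite pseudo-orbit into a forward one.
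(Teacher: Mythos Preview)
Your overall strategy---lift to the suspension, apply Theorem~\ref{teo2.9}, then descend via Theorem~\ref{suspe1}---is exactly the route the paper sketches in the sentence preceding the corollary. The paper gives no further detail, so you are right that the passage from $Sh^+(\phi^{1,f})=X^{1,f}$ to the desired conclusion is left to the reader; your identification of this as the crux is accurate.

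However, your Route~1 does not close the gap. Applying Theorem~\ref{teo2.9} to the reversed flow yields that every point is backward shadowable, so each point is both forward and backward shadowable; but forward and backward shadowability at a point do \emph{not} combine to give two-sided shadowability, since the two one-sided tracing orbits need not lie on a common orbit (the tracing points $y^+$ and $y^-$ are merely $2\varepsilon$-close). Your Route~2 is viable in spirit but, for flows, a diagonal/compactness extraction must also produce a limiting reparametrization in $\mathrm{Rep}$, and with only the bare definition (no Lipschitz control on $h$) this is delicate and not addressed.

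A cleaner way to finish, and likely what the paper intends, is to descend to $f$ \emph{before} attempting the one-sided-to-two-sided passage. From $Sh^+(\phi^{1,f})=X^{1,f}$ one gets (by the forward analog of Lemma~\ref{lemma3}) that the suspension has forward POTP; the forward half of the correspondence in Theorem~\ref{suspe1} (or Thomas's argument) then gives that $f$ has forward POTP. For homeomorphisms on compact spaces the implication ``forward POTP $\Rightarrow$ POTP'' is elementary: given a bi-infinite $\delta$-pseudo-orbit $(x_n)_{n\in\mathbb{Z}}$, trace each shifted forward pseudo-orbit $(x_{n-N})_{n\ge 0}$ by some $y_N$, and any accumulation point of $f^{N}(y_N)$ traces the full sequence---no reparametrizations enter. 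This avoids the obstacle you flagged entirely.
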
 

Recall that a transitive flow $\phi$, see \cite{CC}, is one for which there exists a point $x\in X$ such that $\omega(x) = X$
where 

\[\omega(x) = \{y\in X\colon y = \lim_{t_n\to+\infty}\phi_{t_n}(x)\mbox{ for some sequence } t_n \to +\infty\}.\]
In \cite{Morales16} the question of whether there exists a transitive homeomorphism with  non-trivial shadowable points set is posed. In \cite{Kawaguchi17} the author answers this question negatively and completely classifies the sets of shadowable points for chain transitive and transitive homeomorphisms. A well known result in topological dynamics states that every transitive flow is chain transitive \cite{Alongi07}.  The following corollary, which is an immediate consequence of Theorem \ref{teo2.9},  gives a partial negative answer to this question  in the flow case.

\begin{corollary}\label{coro4}
If $\phi$ is a transitive flow, then $Sh^+(\phi) = X$ or $Sh^+(\phi) = \emptyset$.
\end{corollary}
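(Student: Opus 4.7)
The plan is to deduce this corollary as an immediate consequence of Theorem \ref{teo2.9}, the only ingredient needed being the classical fact that transitivity implies chain transitivity for continuous flows on compact metric spaces (the result from \cite{Alongi07} mentioned in the text).

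First, I would verify that $\phi$ transitive implies $\phi$ chain transitive. Fix $x \in X$ with $\omega(x) = X$, pick arbitrary $p,q \in X$, and fix $\delta, T > 0$. By continuity and compactness there is $\eta \in (0,\delta)$ such that $d(a,b) < \eta$ implies $d(\phi_t(a),\phi_t(b)) < \delta$ for all $t \in [0,T]$. Density of the forward orbit of $x$ produces times $0 < r_0 < r_1$ with $r_1 - r_0 \geq T$ such that $\phi_{r_0}(x)$ is within $\eta$ of $p$ and $\phi_{r_1}(x)$ is within $\delta$ of $q$. Then $(p,\, r_1 - r_0),\, (q,\,0)$ — or, if one prefers, a finer concatenation of jumps along this orbit segment — yields a $(\delta,T)$-chain from $p$ to $q$. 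Symmetry in $p,q$ gives $p \sim q$, so $\phi$ is chain transitive.

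Second, I would simply invoke Theorem \ref{teo2.9}: since $\phi$ is chain transitive, $Sh^+(\phi) = X$ or $Sh^+(\phi) = \emptyset$, which is precisely the conclusion. There is essentially no obstacle here; the content of the corollary is entirely carried by Theorem \ref{teo2.9}, and the only step requiring care is the standard transitive-implies-chain-transitive argument, which is well known and free of technical subtleties beyond the uniform continuity adjustment above.
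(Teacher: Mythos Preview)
Your approach is correct and matches the paper's exactly: the paper states the corollary as an immediate consequence of Theorem~\ref{teo2.9} together with the cited fact from \cite{Alongi07} that every transitive flow is chain transitive, without giving any further argument. Your added sketch of transitive $\Rightarrow$ chain transitive is more than the paper provides (the one-step chain you write needs the ``finer concatenation'' you mention, since $r_1-r_0$ may exceed $T$, but that is standard).
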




Corollary \ref{coro4} can be used to obtain information about the geometric Lorenz attractor \cite{Komuro85, Williams}. In \cite{Komuro85}, it is proved that if  $\phi$ is the geometric Lorenz attractor, then it does not have the finite forward {\em POTP} provided that its return map $f$ satisfies that $f(0)\neq 0$ or $f(1) \neq 1$. In this case we have  $Sh^+(\phi)\neq X$. It follows that $Sh^+(\phi)=\emptyset$.
So we obtain the following result. 
 
 \begin{corollary}\label{lorenz} The geometric Lorenz attractor does not have shadowable points.
 \end{corollary}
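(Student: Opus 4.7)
The plan is to combine Corollary \ref{coro4} with the known negative POTP result for the Lorenz flow proved by Komuro in \cite{Komuro85}, and then descend from $Sh^+$ to $Sh$ via the inclusion $Sh(\phi)\subseteq Sh^+(\phi)$ noted just after the definition of $Sh^+$.

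First I would recall that the geometric Lorenz attractor is a transitive flow; this is one of its defining properties in the standard construction (see \cite{Williams}). Applying Corollary \ref{coro4} to it then forces the dichotomy
\[
Sh^+(\phi)=X \quad\text{or}\quad Sh^+(\phi)=\emptyset.
\]

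Second, I would eliminate the alternative $Sh^+(\phi)=X$. Under the standing hypothesis that the one-dimensional return map $f$ satisfies $f(0)\neq 0$ or $f(1)\neq 1$, Komuro proved in \cite{Komuro85} that the geometric Lorenz flow lacks the finite forward POTP. As the paragraph preceding the statement records, this already gives $Sh^+(\phi)\neq X$: indeed, if every point were forward shadowable, a compactness-uniformization argument that is the forward-in-time analogue of Theorem \ref{teo4}(b) would cover $X$ by finitely many open sets on which the individual $\delta$-radii can be replaced by a single uniform one, producing precisely the forward POTP that Komuro has ruled out. Hence $Sh^+(\phi)=\emptyset$, and the inclusion $Sh(\phi)\subseteq Sh^+(\phi)$ finishes the proof.

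The only step requiring genuine care is the pointwise-to-uniform passage in the second paragraph, since one must check that the same scheme used to derive global POTP from pointwise shadowability in Theorem \ref{teo4}(b) goes through verbatim in the forward, finite-pseudo-orbit setting. Given the compactness of $X$ and the definition of $Sh^+(\phi)$, this is routine rather than substantive, so the corollary reduces to a clean three-line deduction once Corollary \ref{coro4} and \cite{Komuro85} are in hand.
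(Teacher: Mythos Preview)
Your proposal is correct and follows essentially the same argument as the paper, which is given in the paragraph immediately preceding the corollary: transitivity plus Corollary~\ref{coro4} forces the dichotomy on $Sh^+(\phi)$, Komuro's result rules out $Sh^+(\phi)=X$, and the inclusion $Sh(\phi)\subseteq Sh^+(\phi)$ finishes. If anything, you are more explicit than the paper about the pointwise-to-uniform step needed to pass from $Sh^+(\phi)=X$ to the forward POTP, which the paper leaves implicit.
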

 
 We do not know if Theorem \ref{teo2.9} if we substitute forward shadowables points for shadowables points.  We have, however a related result. As stated previously, every transitive flow is chain transitive. However, it is well known the reciprocal is not true \cite{Alongi07}. The following theorem states that in the presence of forward shadowable points the two notions are equivalent.

\begin{theorem}\label{teo2.3.6}
	Let $X$ compact metric space and $\phi$ be a flow with shadowable points. Then $\phi$ is chain transitive if and only if is transitive.
\end{theorem}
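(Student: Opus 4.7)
The ``if'' direction is cited in the excerpt, so the task reduces to showing that if $\phi$ is chain transitive and $Sh(\phi)\neq\emptyset$ then $\phi$ is transitive. Fix $p\in Sh(\phi)$. The plan is to first establish topological transitivity in the strong form that for every pair of nonempty open sets $U,V\subseteq X$ and every $M>0$ there exists $t>M$ with $\phi_t(U)\cap V\neq\emptyset$, and then to deduce the existence of a point $x\in X$ with $\omega(x)=X$ by a standard Baire category argument.

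For the topological transitivity step I would fix nonempty open sets $U,V$, choose $u\in U$, $v\in V$ and $\varepsilon>0$ with $B(u,\varepsilon)\subseteq U$ and $B(v,\varepsilon)\subseteq V$, and let $\delta>0$ be supplied by the shadowability of $p$ at scale $\varepsilon$. Chain transitivity produces $(\delta,1)$-chains from $p$ to $u$, from $u$ to $v$ and from $v$ back to $p$; since it also forces $p\sim p$, there is a $(\delta,1)$-chain $\eta$ from $p$ to itself. Concatenating these as $\cdots\eta\cdot\eta\cdot(p\to u\to v\to p)\cdot\eta\cdot\eta\cdots$ gives a bi-infinite $(\delta,1)$-pseudo-orbit $(x_i,t_i)_{i\in\mathbb{Z}}$ with $x_0=p$. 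Shadowability supplies $z\in X$ and $h\in\mathrm{Rep}$ with $d(x_0\star t,\phi_{h(t)}(z))\leq\varepsilon$ for every $t\in\mathbb{R}$. Letting $t^u<t^v$ be the cumulative-time parameters at which the pseudo-orbit hits $u$ and $v$, one reads off $\phi_{h(t^u)}(z)\in U$ and $\phi_{h(t^v)}(z)\in V$, while the strict monotonicity of $h$ with $h(0)=0$ yields $s:=h(t^v)-h(t^u)>0$. The semigroup property then gives $\phi_s(\phi_{h(t^u)}(z))=\phi_{h(t^v)}(z)\in V$, whence $\phi_s(U)\cap V\neq\emptyset$.

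To upgrade this to $t>M$ for prescribed $M>0$, I would apply the previous step to the pair $(U,\phi_{-M}(V))$: since $\phi_{-M}$ is a homeomorphism, $\phi_{-M}(V)$ is open and nonempty, and the resulting inequality $\phi_{s_0}(U)\cap\phi_{-M}(V)\neq\emptyset$ becomes $\phi_{s_0+M}(U)\cap V\neq\emptyset$ after composing with $\phi_M$, with $s_0+M>M$. Fixing a countable base $\{V_m\}_{m\geq1}$ of $X$ and letting $G_{m,n}=\{x\in X:\phi_t(x)\in V_m\text{ for some }t>n\}$, the sets $G_{m,n}$ are open and dense in $X$ by the strengthened transitivity. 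Baire's theorem then provides a dense $G_\delta$ of points $x$ for which $\omega(x)$ meets every $V_m$; since $\omega(x)$ is closed, any such $x$ satisfies $\omega(x)=X$, so $\phi$ is transitive.

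The main obstacle is the lack of any Lipschitz control on $h\in\mathrm{Rep}$, which prevents one from reading a lower bound on the ``real'' elapsed time $h(t^v)-h(t^u)$ directly from the parameter-time length of the pseudo-orbit between $u$ and $v$; in principle $h$ might compress that interval arbitrarily. The detour through $\phi_{-M}(V)$ bypasses this issue entirely, converting the qualitative statement ``some positive $t$ works'' into the quantitative ``arbitrarily large $t$ works'' without ever touching $h$. The remaining pieces, namely the assembly of the bi-infinite pseudo-orbit via chain recurrence of $p$ and the Baire-category upgrade to $\omega(x)=X$, are then routine.
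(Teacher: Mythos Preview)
Your argument is correct and follows essentially the same route as the paper: fix a shadowable point, use chain transitivity to thread a $(\delta,1)$-pseudo-orbit through representatives of $U$ and $V$ while passing through that point at index $0$, shadow it, and read off $\phi_s(U)\cap V\neq\emptyset$ from the monotonicity of $h$. The only cosmetic difference in the core step is that the paper extends the finite chain $u\to p\to v$ to a bi-infinite pseudo-orbit more simply, by prepending the genuine backward orbit of $u$ (pairs $(\phi_{k+m}(u),1)$ for $k<-m$) and appending the genuine forward orbit of $v$, rather than invoking $p\sim p$ to manufacture loops $\eta$; both constructions are valid. The paper then stops after obtaining a single nonnegative $T$ with $\phi_T(U)\cap V\neq\emptyset$, citing the Baire-space equivalence with point transitivity as known, whereas you spell out the upgrade to arbitrarily large times via $\phi_{-M}(V)$ and the $G_\delta$ argument yielding $\omega(x)=X$; this extra care is warranted given the paper's specific definition of transitivity and is a welcome clarification rather than a departure.
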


\begin{example}
In virtue of Theorem \ref{teo2.3.6}, for every chain transitive flow $\phi$ which is not transitive we have $Sh(\phi) = \emptyset$. For a concrete example, take $X = S^1$ and $\phi$ the flow associated with the differential equation $\dot{\theta} = \sin^2(\theta)$ in the angular coordinates of $S^1$. 
\end{example}

In what follows we are going to see how Theorem  \ref{teo2.3.6} can be used to prove some interesting well known theorems.

Recall that a flow $\phi$ is distal if whenever $\inf_{t\in\mathbb{R}}d(\phi_t(x),\,\phi_t(y)) = 0$ implies $x=y$. Komuro showed that isometric flows with the pseudo orbit tracing property are minimal flows \cite{Komuro84}. Meanwhile Kato proved that equicontinuous flows with the pseudo orbit tracing property are also minimal flows \cite{Kato84}. Later,  He and Wang proved that distal flows with the pseudo-orbit tracing propery are minimal too \cite{He95}. Finally, Jiehua \cite{Jiehua1996} proved that pointwise recurrent flows with the pseudo-orbit tracing property are minimal.

In light of these results, is natural to ask if we still can conclude minimality if the pointwise recurrence hypothesis is weakened to suppose the flow to be chain recurrent. The answer is negative as there are nonminimal chain recurrent flows  with the pseudo orbit tracing property, for instance the suspension of the usual linear anosov map on the torus. However, there are not known examples of nontransitive chain recurrent flows with the pseudo-orbit tracing property on connected spaces. We are going to proof that transitivity is a necessary condition is the phase space is assumed to be connected and we give an example that shows that this is not the case on nonconnected phase spaces. Indeed, If $X$ is assumed to be connected and $\phi$ chain recurrent, then $\phi$ is necessarily chain transitive \cite{Alongi07}. The following corollary is immediate from Theorem \ref{teo2.3.6}:

\begin{corollary}\label{coroim}
    Let $\phi$ be a chain recurrent flow with shadowable points on a compact connected metric space $X$. Then $\phi$ is transitive.
\end{corollary}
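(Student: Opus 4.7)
The plan is to combine two ingredients that are already in play: the structural observation that on a compact connected space chain recurrence automatically upgrades to chain transitivity, together with Theorem \ref{teo2.3.6}, which upgrades chain transitivity to transitivity in the presence of shadowable points.

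First, under the hypothesis that $X$ is compact connected and $\phi$ is chain recurrent, I will verify that $\phi$ is chain transitive. The argument exploits the fact that the chain-related relation $\sim$ restricted to $CR(\phi)$ is an equivalence relation whose equivalence classes (the chain components) are closed in $X$ and open in $CR(\phi)$. Since the hypothesis gives $CR(\phi)=X$, each chain component is clopen in the connected space $X$; connectedness then forces there to be a single chain component, which is precisely the statement that $\phi$ is chain transitive. This reduction is the one already cited to \cite{Alongi07} in the paragraph preceding the corollary, so nothing new has to be proved here.

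Second, given that $\phi$ is chain transitive and $Sh(\phi)\neq \emptyset$, I invoke Theorem \ref{teo2.3.6} directly in its ``chain transitive implies transitive'' direction; this yields that $\phi$ is transitive, completing the argument.

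There is no substantial obstacle beyond bookkeeping: the corollary is essentially a two-line deduction from Theorem \ref{teo2.3.6}. The only place where the connectedness of $X$ is actually used is the passage from chain recurrence to chain transitivity, and the only place where the shadowable points hypothesis is used is the final invocation of Theorem \ref{teo2.3.6}. If anything requires a moment of care, it is checking that openness of the chain components in $CR(\phi)$ becomes genuine openness in $X$ once $CR(\phi)=X$; but this is automatic.
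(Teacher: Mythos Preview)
Your proposal is correct and matches the paper's own justification exactly: the paper states the corollary as immediate from Theorem~\ref{teo2.3.6} after noting (with a citation to \cite{Alongi07}) that a chain recurrent flow on a compact connected space is chain transitive. Your elaboration of the chain-component argument is a faithful unpacking of that cited fact, and the second step is precisely the intended application of Theorem~\ref{teo2.3.6}.
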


The following example shows that the conclusion of Corollary \ref{coroim} cannot be guaranteed if we drop the connecteness hypothesis.
\begin{example}
	In \cite{mo}, it is proved that an equicontinuous homeomorphism $f$ on a compact metric space $X$  has the {\em POTP} if and only if $X$ is totally disconected. So if $C$ is the usual ternary Cantor set in the interval $[0,\,1]$, then identity map $id\colon C\to C$ has the {\em POTP}. The suspension flow of this map is then a chain recurrent flow that has {\em POTP} \cite{Thomas84}. But this flow is not transitive for its phase space is not connected. 
\end{example}

The following results mentioned previously can be obtained as a consequence of Corollary \ref{coroim}, giving thus, a more concise proof of these.

\begin{corollary}[He, L., Wang, M., \cite{He95}]
	Every distal flow with POTP on a connected compact metric is minimal.
\end{corollary}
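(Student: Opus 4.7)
The plan is to feed the hypotheses through Corollary \ref{coroim}, using two classical consequences of the Ellis structure theorem for distal flows on compact metric spaces: (i) every orbit closure $\overline{\phi_{\mathbb{R}}(x)}$ is a minimal set, and (ii) in particular every point is (uniformly) recurrent. Both facts should be cited rather than reproved, since the goal of this corollary is merely to showcase Corollary \ref{coroim} as a bridge.

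First, by Theorem \ref{teo4}(b) the POTP hypothesis gives $Sh(\phi)=X$, so every point of $X$ is shadowable. Second, since $\phi$ is distal, the Ellis theorem forces every $x\in X$ to be recurrent, hence non-wandering, so $\Omega(\phi)=X$ and therefore $CR(\phi)=X$; that is, $\phi$ is chain recurrent. At this point all hypotheses of Corollary \ref{coroim} are satisfied ($X$ is compact connected metric, $\phi$ is chain recurrent, and $Sh(\phi)=X$), so $\phi$ is transitive. Fix $x\in X$ with $\omega(x)=X$, so that $\overline{\phi_{\mathbb{R}}(x)}=X$. Invoking the Ellis result once more, the orbit closure of the distal-flow point $x$ is a minimal set, so $X=\overline{\phi_{\mathbb{R}}(x)}$ is itself minimal under $\phi$. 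This is exactly the minimality of $\phi$.

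The main obstacle is conceptual rather than technical: one must import the Ellis structure theorem for distal flows to obtain both pointwise recurrence (to get chain recurrence, so that Corollary \ref{coroim} applies) and the minimality of orbit closures (to upgrade transitivity to minimality). Once those standard facts are granted, the argument is essentially a one-line application of Corollary \ref{coroim}, which is precisely the novel mechanism of this paper: it is Corollary \ref{coroim} that lets the connectedness hypothesis replace the usual direct analysis of distal semigroups in passing from chain recurrence to transitivity.
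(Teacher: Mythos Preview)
Your proof is correct and follows essentially the same route as the paper's: distal $\Rightarrow$ chain recurrent, then Corollary \ref{coroim} gives transitivity, then distal $+$ transitive $\Rightarrow$ minimal. The paper compresses this to the single sentence ``every distal flow is chain recurrent and every transitive distal flow is minimal,'' whereas you unpack both facts via the Ellis structure theorem; the underlying argument is the same.
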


\begin{proof}
It is enough to note that every distal flow is chain recurrent and every transitive distal flow is minimal.
\end{proof}

A flow $\phi$ is equicontinuous if the family of $t$-time maps $\{\phi_{t}\}_{t\in\mathbb{R}}$ is an equicontinuous family of homeomorphisms in $X$.

\begin{corollary}[Kato, K., \cite{Kato84}]
	Let $M$ be a Riemannian manifold and $\phi$ an equicontinuous flow with respect to the Riemannian metric of $M$. If $\phi$  has the finite POTP then $\phi$ is minimal.
\end{corollary}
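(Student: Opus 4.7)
The plan is to reduce the statement to Corollary \ref{coroim}. On a compact metric space a standard compactness/diagonal argument shows that the finite POTP implies the POTP of Definition \ref{defi01}, so Theorem \ref{teo4}(b) yields $Sh(\phi)=M$. Riemannian manifolds are assumed connected by convention, so the only remaining hypothesis to verify before applying Corollary \ref{coroim} is that $\phi$ is chain recurrent.

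For chain recurrence I would appeal to the classical fact that an equicontinuous flow on a compact metric space is pointwise almost periodic: the orbit closure of every point is a minimal set and, in particular, every point is recurrent. Since every recurrent point is chain recurrent, this gives $CR(\phi)=M$, and together with $Sh(\phi)=M$ one obtains the inclusion $CR(\phi)\subseteq Sh(\phi)$ needed to invoke Corollary \ref{coroim}. An alternative route is to replace $d$ by an Arens-type $\phi$-invariant metric under which each $\phi_t$ becomes an isometry, making recurrence of every point transparent.

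Applying Corollary \ref{coroim} then produces a point $x_0\in M$ with $\omega(x_0)=M$, i.e.\ $\phi$ is transitive. The final step is to upgrade transitivity to minimality using equicontinuity: for any $y\in M$, equicontinuity forces $\overline{\phi_{\mathbb{R}}(y)}$ to be a minimal set, so the orbit closures of $\phi$ partition $M$ into minimal sets; since one of these closures is all of $M$, every orbit closure must coincide with $M$, and $\phi$ is minimal. The main obstacle is not in the shadowing machinery of the paper but in the two auxiliary facts about equicontinuous flows that one needs to quote precisely, namely pointwise almost periodicity and the passage from transitivity to minimality; both are classical, but they are the points of the argument that would require careful referencing.
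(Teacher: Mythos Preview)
Your argument is correct and lands on the same key tool, Corollary \ref{coroim}, but the paper's proof is shorter because it reduces immediately to the preceding corollary of He and Wang: it simply observes that an equicontinuous flow is distal, and then invokes the result that every distal flow with POTP on a connected compact metric space is minimal. That corollary in turn packages the two facts you supply by hand---chain recurrence (``every distal flow is chain recurrent'') and the upgrade from transitivity to minimality (``every transitive distal flow is minimal'').

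So the difference is that you argue chain recurrence and the transitive-to-minimal step directly from equicontinuity (via pointwise almost periodicity and the partition of the space into minimal orbit closures), whereas the paper passes through distality and reuses the already-proved He--Wang corollary. Your route is self-contained but requires quoting two classical facts about equicontinuous systems; the paper's route is a one-line reduction that leans on the corollary just above it. Both are valid and ultimately rely on Corollary \ref{coroim} at the core.
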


\begin{proof}
It is enough to note that an equicontinuous flow is distal.
\end{proof}

\begin{corollary}[Komuro, M., \cite{Komuro84}]
	Let $M$ be a Riemannian manifold and $\phi$ an isometric flow with respect to the Riemannian metric of $M$. If $\phi$  has the finite POTP then $\phi$ is minimal.
\end{corollary}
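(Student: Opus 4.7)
The plan is to observe that the statement reduces immediately to one of the two corollaries just proved, so nothing new needs to be built. The cleanest route is to note that isometric flows are distal and then quote the He--Wang corollary directly; alternatively one may pass through the Kato corollary by observing isometric $\Rightarrow$ equicontinuous.

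Concretely, I would first check that if $\phi$ is isometric then $d(\phi_t(x),\phi_t(y)) = d(x,y)$ for every $t$, so the function $t\mapsto d(\phi_t(x),\phi_t(y))$ is constant. Hence $\inf_{t\in\mathbb{R}} d(\phi_t(x),\phi_t(y)) = d(x,y)$, which forces $x=y$ whenever this infimum vanishes; this is exactly distality. (Equivalently, the family $\{\phi_t\}_{t\in\mathbb{R}}$ consists of isometries, hence is trivially equicontinuous, which is what one needs to invoke Kato.) Since the hypotheses ask $\phi$ to carry the finite POTP on the Riemannian manifold $M$ (implicitly assumed compact and connected, as throughout the section), the previous corollary of He--Wang applies and gives minimality of $\phi$.

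There is essentially no obstacle: once distality (or equicontinuity) is recognized as an immediate consequence of the isometric hypothesis, the conclusion follows from the chain of implications already established (shadowable points come for free from POTP by Theorem \ref{teo4}(b), chain recurrence from distality, hence transitivity by Corollary \ref{coroim}, and finally minimality because a transitive distal flow is minimal). The only thing worth being careful about is the connectedness assumption required by Corollary \ref{coroim}, which is compatible with the Riemannian manifold setting normally adopted in the cited theorem of Komuro.
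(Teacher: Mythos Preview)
Your proposal is correct and takes essentially the same approach as the paper: the paper's proof is the single sentence ``Simply note that isometric flows are also distal,'' reducing immediately to the He--Wang corollary, and you do exactly this (with the extra care of spelling out why $\inf_t d(\phi_t(x),\phi_t(y))=d(x,y)$).
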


\begin{proof}
Simply note that isometric flows are also distal.
\end{proof}


\section{Preliminaries} 

Let $X$ be a compact metric space. We say that a sequence $(x_n,t_n)_{n\in \mathbb{Z}}$ of $X\times\mathbb{R}$ is through some subset $K\subseteq X$  if $x_0\in K$ (see \cite{Morales16}). Now we introduce the following auxiliary definition.

\begin{definition}\label{defi5}
We say that a flow $\phi\colon X\times\mathbb{R}\to X$ has the {\em POTP} through a subset $K$, if given $\ep > 0$, there exists $\delta > 0$ such that every  $(\delta,\, 1)$-pseudo-orbit passing through $K$ can be $\ep$-shadowable.
\end{definition}

Note that we do not require the entire $(\delta,\, 1)$-pseudo-orbit to be contained in $K$, therefore the definition \ref{defi5} is stronger than the {\em POTP} \emph{on} $K$ \cite{Pilyugin2008}. 

A sequence of pairs $(x_i,\, t_i)_{i\in \mathbb{Z}}$ is a $(\delta,\,T_1,\,T_2)$-pseudo-orbit of $\phi$ if it is a $(\delta,\,T_1)$-pseudo-orbit of $\phi$ and satisfies $t_i \leq T_2$, for all $i\in\mathbb{Z}$. 

In \cite{Thomas84}, Thomas proved that a flow satisfies the {\em POTP} with respect to the parameter $T$ if and only if for every $\ep > 0$ we can find $\delta > 0$ such that every $(\delta,\,T,\, 2T)$-pseudo-orbit can be $\ep$-shadowed. We shall use the following lemma which is essentially contained in \cite{Thomas84}. We include its proof for the sake of completeness.

\begin{lemma}\label{lemma2}
Let $a>0$, $K\subseteq X$ and $\phi$ be a flow on a compact metric space $X$. Then the following statements are equivalent:
\begin{enumerate}[(1)]
\item For all $\ep>0$ there exists $\delta>0$ such that every $(\delta,a,2a)$-pseudo-orbit passing through $K$ is $\ep$-shadowed by an orbit of $\phi$.
\item $\phi$ has the POTP through $K$ with respect to the parameter $a$.
\item $\phi$ has the POTP through $K$.
\end{enumerate}
\end{lemma}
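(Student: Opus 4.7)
The plan is to prove $(2) \Leftrightarrow (1)$ by a refinement argument and $(2) \Leftrightarrow (3)$ by a classical parameter–change argument in the spirit of Thomas.

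The implication $(2) \Rightarrow (1)$ is immediate since every $(\delta,a,2a)$-pseudo-orbit is a $(\delta,a)$-pseudo-orbit. For $(1) \Rightarrow (2)$, given any $(\delta,a)$-pseudo-orbit $(x_i,t_i)_{i\in\mathbb{Z}}$ passing through $K$, I would refine each jump with $t_i > 2a$ by inserting the genuine orbit points $\phi_{ka}(x_i)$ for $k = 1,\dots,\lfloor t_i/a\rfloor-1$, assigning new jump times $a,a,\dots,a$ and a final piece of length $t_i-(\lfloor t_i/a\rfloor-1)a \in [a,2a)$. The inserted pairs satisfy $d(\phi_a(\phi_{ka}(x_i)),\phi_{(k+1)a}(x_i))=0$, so the refinement is a $(\delta,a,2a)$-pseudo-orbit through $K$, with the same starting point $x_0$ and the same parametrized trajectory $t \mapsto x_0\star t$. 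Shadowing the refinement via $(1)$ therefore shadows the original, giving $(2)$.

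For $(2) \Leftrightarrow (3)$, if $a \le 1$ then every $(\delta,1)$-pseudo-orbit automatically satisfies $t_i \ge 1 \ge a$, making $(2)\Rightarrow(3)$ immediate; if $a \ge 1$ the opposite inclusion renders $(3)\Rightarrow(2)$ immediate. The nontrivial direction, in either case, reduces to passing from a pseudo-orbit with short jumps to one with longer jumps by grouping consecutive entries. For concreteness, take a $(\delta,a)$-pseudo-orbit $(x_i,t_i)$ through $K$ with $a<1$ and define indices $0=i_0<i_1<\dots$ minimally so that $s_k := \sum_{j=i_k}^{i_{k+1}-1} t_j \ge 1$; each such $s_k$ lies in a bounded interval depending only on $a$, so $s_k \in [1, 1+a]$. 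A telescoping estimate using the uniform continuity of $\phi$ on the compact time interval $[0,1+a]$ yields $d(\phi_{s_k}(x_{i_k}),x_{i_{k+1}}) \le M(\delta)$ for a function $M$ with $M(\delta)\to 0$ as $\delta \to 0$. Choosing $\delta$ small enough that $M(\delta) < \delta_0$, where $\delta_0$ is the constant given by $(3)$ for tolerance $\ep/2$, the grouped sequence $(x_{i_k}, s_k)_{k\in\mathbb{Z}}$ is a $(\delta_0,1)$-pseudo-orbit through $K$ (since $x_{i_0}=x_0 \in K$), and is therefore $\ep/2$-shadowed by some orbit.

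The main obstacle is this grouping step: to conclude that the same orbit $\ep$-shadows the original $(\delta,a)$-pseudo-orbit, one must check that the two parametrized trajectories $t \mapsto x_0\star t$ (for the original) and $t \mapsto y_0\star t$ (for the grouped sequence, with $y_k=x_{i_k}$) differ by at most $M(\delta)$ on $\mathbb{R}$. This follows from the same telescoping argument: for $t$ with $\sigma_j \le t < \sigma_{j+1}$ and $i_k \le j < i_{k+1}$, one writes $y_0\star t = \phi_{t-\sigma_j}\bigl(\phi_{\sigma_j-\sigma_{i_k}}(x_{i_k})\bigr)$ and uses uniform continuity of $\phi_{t-\sigma_j}$ with $t-\sigma_j \le t_j \le 1+a$ together with the accumulated error $d(\phi_{\sigma_j-\sigma_{i_k}}(x_{i_k}),x_j) \le M(\delta)$. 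Shrinking $\delta$ further if necessary makes this discrepancy less than $\ep/2$, so the shadow of the grouped pseudo-orbit is $\ep$-close to the original $t\mapsto x_0\star t$, completing $(3)\Rightarrow(2)$.
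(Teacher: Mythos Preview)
Your overall strategy matches the paper's exactly: refinement of long jumps for $(1)\Leftrightarrow(2)$, and grouping of short jumps for the nontrivial direction of $(2)\Leftrightarrow(3)$. The refinement step and the identification of which direction is trivial depending on whether $a\le 1$ or $a\ge 1$ are both correct.

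There is, however, a genuine gap in the grouping argument. You take a general $(\delta,a)$-pseudo-orbit $(x_i,t_i)$ with $a<1$ and assert that the minimally chosen block sums satisfy $s_k\in[1,1+a]$, and later that the individual jumps satisfy $t_j\le 1+a$. Neither bound holds: a $(\delta,a)$-pseudo-orbit only requires $t_i\ge a$, with no upper bound on $t_i$. If, say, $t_0=100$, then the first block is $\{0\}$ alone and $s_0=100$; your uniform-continuity estimate on the compact interval $[0,1+a]$ then fails, both for the telescoping bound on $d(\phi_{s_k}(x_{i_k}),x_{i_{k+1}})$ and for the comparison of the two parametrized trajectories at the end.

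The fix is the one the paper uses implicitly (and the one you already have in hand): first invoke the equivalence $(1)\Leftrightarrow(2)$ to reduce to $(\delta,a,2a)$-pseudo-orbits through $K$. With $t_j\in[a,2a]$ the minimal block sums genuinely lie in $[1,1+2a]$, every time parameter appearing in the telescoping is bounded by $1+2a$, and your argument goes through verbatim. The paper does exactly this in the symmetric case $a>1$: it first restricts to $(\beta,1,2)$-pseudo-orbits, groups $m\ge a$ consecutive jumps to produce a $(\delta,a)$-pseudo-orbit through $K$, and only afterwards appeals to the refinement step to pass from $(\beta,1,2)$- to arbitrary $(\beta,1)$-pseudo-orbits.
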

\begin{proof}
We assume that $a>1$. For the other case (when $a <1$) a similar argument can be used. Suppose that for all $\ep>0$ there exists $\delta>0$ such that every $(\delta,a,2a)$-pseudo-orbit passing through $K$ is $\ep$-shadowed by an orbit of $\phi$. First we prove that $\phi$ has the {\em POTP} through $K$ with respect to the parameter $a$ and then we prove that the flow has the  {\em POTP} through $K$. Let $(x_i,\,t_i)_{i\in\mathbb{Z}}$ be any $(\delta,\,a)$-pseudo-orbit of $\phi$ passing through $K$. For each $n\in \mathbb{Z}$, there exists $m_n\in \mathbb{N}$ such that $t_n=m_na+r_n$ with $a\leq r_n<2a$. Let $(s_n^m)_{n\in \mathbb{Z}}$ the sequence of sums associated to $m=(m_n)_{n\in \mathbb{Z}}$. Denote $A_n=s_n^m+n$ for all $n\in \mathbb{Z}$ and define the sequence $(y_i)_{i\in \mathbb{Z}}$ on $X$ such that $y_i=\phi_{a(i-A_n)}(x_n)$ if $A_n\leq i<A_{n+1}$. In addition, we define a sequence $\lambda=(\lambda_i)_{i\in \mathbb{Z}}$ of real numbers in the following way, for each $i\in \mathbb{Z}$, we set
$$
\lambda_i = \left\{
\begin{array}{cl}
a &\mbox{if \ } A_n\leq i<A_{n+1}-1,\\
r_n&\mbox{if\ \ } i=A_{n+1}-1.
\end{array}\right.
$$
Given $i\in\mathbb{Z}$ note that $a\leq \lambda_i<2a$ and let $n\in \mathbb{Z}$ be such that $A_n\leq i<A_{n+1}$. We have two cases.

{\em Case 1:} if $i<A_{n+1}-1$, then $$d(\phi_{\lambda_i}(y_i),y_{i+1})=d(\phi_a(\phi_{a(i-A_n)}(x_n)),\phi_{a(i+1-A_n)}(x_n))=0.$$

{\em Case 2:} if $i=A_{n+1}-1$, bearing in mind that $A_{n+1}-A_n=s_{n+1}^m-s_{n}^m+1=m_n+1$ we obtain
\begin{eqnarray*}
d(\phi_{\lambda_i}(y_i),y_{i+1})=d(\phi_{r_n}(\phi_{a(A_{n+1}-1-A_n)}(x_n)),x_{n+1})&=&d(\phi_{r_n}(\phi_{am_n}(x_n)),x_{n+1})\\ &=& d(\phi_{t_n}(x_n),x_{n+1})\leq \delta.
\end{eqnarray*}
That is, $(y_i,\lambda_i)_{i\in\mathbb{Z}}$ is a $(\delta,a,2a)$-pseudo-orbit of $\phi$ passing through $K$. Then, there are $z\in X$ and $h\in \mathrm{Rep}$ such that $d(\phi_{r-s^{\lambda}_n}(y_n),\phi_{h(r)}(z))\leq \ep$ where $s^{\lambda}_n\leq r<s^{\lambda}_{n+1}$ and $(s^{\lambda}_i)$ is the sequence of sums associated to $\lambda=(\lambda_i)_{i\in \mathbb{Z}}$. Let $w\in \mathbb{R}$ and $n\in \mathbb{Z}$ such that $s^{t}_n\leq w<s^{t}_{n+1}$, where $(s^{t}_n)$ is associated to $t=(t_i)_{i\in \mathbb{Z}}$. Since $s^{t}_n=s^{\lambda}_{A_n}$, then $s^{\lambda}_{A_n}\leq w<s^{\lambda}_{A_{n+1}}=s^{\lambda}_{A_n+m_n+1}$. Hence, there is $0\leq j\leq m_n$ such that $s^{\lambda}_{A_n+j}\leq w<s^{\lambda}_{A_{n}+j+1}$ and then
\begin{eqnarray*}
\ep \geq d(\phi_{w-s^{\lambda}_{A_n+j}}(y_{A_n+j}),\phi_{h(w)}(z))&=& d(\phi_{w-s^{t}_n}(\phi_{s^{t}_n-s^{\lambda}_{A_n+j}}(y_{A_n+j})),\phi_{h(w)}(z))\\
&=& d(\phi_{w-s^{t}_n}(\phi_{s^{t}_n-s^{\lambda}_{A_n+j}}(\phi_{aj}(x_n))),\phi_{h(w)}(z))\\
&=& d(\phi_{w-s^{t}_n}(x_n),\phi_{h(w)}(z)).
\end{eqnarray*}
It follows that $\phi$ has the {\em POTP} through $K$ with respect to the parameter $a$. Now we prove that the flow has the  {\em POTP} through $K$. Fix $m\in \mathbb{N}$ such that $m\geq a$. Given $\ep>0$ choose $\delta>0$ satisfying the following conditions:

\begin{enumerate}
\item Every $(\delta,a)$-pseudo-orbit passing through $K$ is $\frac{\ep}{2}$-shadowable.
\item For each $0\leq t\leq 2m$ we have $d(\phi_t(x),\phi_t(y))<\frac{\ep}{2}$, whenever $d(x,y)<\delta$.
\end{enumerate}
Let $0<\delta^{\prime}<\delta/m$ and take $0<\beta<\delta^{\prime}$ so that $d(x,y)<\beta$ implies that $d(\phi_t(x),\phi_t(y))<\delta^{\prime}$ for $0\leq t\leq 2m$. Let $(x_n,t_n)_{n\in \mathbb{Z}}$ be a $(\beta,1)$-pseudo-orbit for $\phi$ passing through $K$ with $1\leq t_n\leq 2$ for all $n\in \mathbb{Z}$. Consider the sequence of pairs $(x_{im},\lambda_i)_{i\in \mathbb{Z}}$ where $\lambda_i=\sum_{j=0}^{m-1}t_{j+im}$ for every $i\in\mathbb{Z}$. We denote $\lambda_i(k)=\sum_{j=k}^{m-1}t_{j+im}$ with $0\leq k<m$. Then

$$d(\phi_{\lambda_i}(x_{im}),x_{(i+1)m})\leq \sum_{r=1}^{m}d(\phi_{\lambda_i(r)}(\phi_{t_{im+r-1}}(x_{im+r-1})),\phi_{\lambda_i(r)}(x_{im+r}))\leq m\delta^{\prime}<\delta,$$ 
because $a\leq \lambda_i\leq 2m$. So, $(x_{im},\lambda_i)_{i\in \mathbb{Z}}$ is a $(\delta,a)$-pseudo-orbit for $\phi$ passing through $K$. Hence, there are $z\in X$ and $h\in \mathrm{Rep}$ such that $d(\phi_{t-s^{\lambda}_n}(x_{nm}),\phi_{h(t)}(z))\leq \frac{\ep}{2}$ where $s^{\lambda}_n\leq t< s^{\lambda}_{n+1}$. Now, for $0\leq k<m$ denote $s^{t}_k(r)=\sum_{j=r}^{k-1}t_j$ we have
$$d(\phi_{s^{t}_k}(x_0),x_k)\leq \sum_{r=1}^{k}d(\phi_{s^{t}_k(r)}(\phi_{t_{r-1}}(x_{r-1})),\phi_{s^{t}_k(r)}(x_r))<k\delta^{\prime}<\delta.$$ Then for $s^{t}_k\leq t<s^{t}_{k+1}$ $$d(\phi_{t-s^{t}_k}(x_k),\phi_{h(t)}(z))\leq d(\phi_{t-s^{t}_k}(x_k),\phi_{t-s^{t}_k}(\phi_{s^{t}_k}(x_0)))+d(\phi_{t}(x_0),\phi_{h(t)}(z))\leq \ep.$$
For $m\leq k<2m$, we continue in the same manner. So we will have that the orbit $(\phi_t(z))_{t\in\mathbb{R}}$  $\ep$-shadows the $(\beta,1,2)$-pseudo-orbit of $\phi$ passing through $K$. Applying the above reasoning we obtain that $(\phi_t(z))$,  can be $\ep$-shadowed by the $(\beta,1)$-pseudo-orbit of $\phi$ passing through $K$.
\end{proof}

Hence a flow $\phi$ in a compact metric space $X$ has {\em POTP}  if and only if for all $\ep>0$ there exists $\delta>0$ such that every $(\delta,1,2)$-pseudo-orbit is $\ep$-shadowed by an orbit of $\phi$.

We denote by $B[\,\cdot,\,\delta]$ the close ball operation on $X$. 

Clearly, if a flow has the {\em POTP} through a set $K$ then every point is $K$ is shadowable respectively. The reciprocal is also true in the compact case as shown in the following lemma. 

\begin{lemma}\label{lemma3}
Let $\phi$ be a flow on a compact metric space $X$. If every point of a compact subset $K$ of $X$ is shadowable, then $\phi$ has the POTP through the set $K$.
\end{lemma}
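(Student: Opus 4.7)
The plan is to turn the pointwise shadowability at each $p \in K$ into a uniform shadowability through $K$ by a standard compactness-and-continuity argument, after first reducing to pseudo-orbits with bounded step lengths via Lemma \ref{lemma2}.

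First, I would fix $\varepsilon > 0$ and, for each $p \in K$, use the hypothesis $p \in Sh(\phi)$ to choose $\delta_p > 0$ such that every $(\delta_p, 1)$-pseudo-orbit through $p$ is $\varepsilon/2$-shadowed. By uniform continuity of $\phi$ on the compact set $X \times [0, 2]$, I would then pick $\eta_p \in (0, \delta_p/2]$ so that $d(q, p) < \eta_p$ implies $d(\phi_t(q), \phi_t(p)) < \min\{\delta_p/2,\, \varepsilon/2\}$ for every $t \in [0, 2]$. Compactness of $K$ then extracts a finite subcover $\{B(p_j, \eta_{p_j})\}_{j=1}^n$, and I would set $\delta = \min_{1 \le j \le n}\{\delta_{p_j}/2\}$.

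The key step is the substitution trick. Given any $(\delta, 1, 2)$-pseudo-orbit $(x_i, t_i)_{i \in \mathbb{Z}}$ with $x_0 = q \in K$, I choose $j$ so that $q \in B(p_j, \eta_{p_j})$ and form the modified sequence $(y_i, t_i)_{i \in \mathbb{Z}}$ where $y_0 = p_j$ and $y_i = x_i$ for $i \neq 0$. The only two pseudo-orbit inequalities that change are at indices $-1$ and $0$: the former is controlled by the triangle inequality $d(\phi_{t_{-1}}(x_{-1}), p_j) \le d(\phi_{t_{-1}}(x_{-1}), q) + d(q, p_j) \le \delta + \eta_{p_j} \le \delta_{p_j}$, while the latter uses uniform continuity of $\phi_{t_0}$ together with $t_0 \in [1,2]$ to get $d(\phi_{t_0}(p_j), x_1) \le d(\phi_{t_0}(p_j), \phi_{t_0}(q)) + d(\phi_{t_0}(q), x_1) \le \delta_{p_j}/2 + \delta \le \delta_{p_j}$. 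Hence $(y_i, t_i)$ is a genuine $(\delta_{p_j}, 1)$-pseudo-orbit through $p_j$, and shadowability of $p_j$ yields $z \in X$ and $h \in \mathrm{Rep}$ with $d(y_0 \star t, \phi_{h(t)}(z)) \le \varepsilon/2$ for all $t \in \mathbb{R}$.

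It remains to transfer the shadowing back to the original pseudo-orbit. For $t$ outside the interval $[0, t_0)$, the formulas $x_0 \star t$ and $y_0 \star t$ coincide because the sequences agree at every index other than $0$, so the bound $\varepsilon/2$ immediately gives $\varepsilon$-shadowing. For $t \in [0, t_0) \subseteq [0, 2]$, one uses $d(\phi_t(q), \phi_{h(t)}(z)) \le d(\phi_t(q), \phi_t(p_j)) + d(\phi_t(p_j), \phi_{h(t)}(z)) < \varepsilon/2 + \varepsilon/2 = \varepsilon$, the first term being controlled by our choice of $\eta_{p_j}$. Thus $(x_i, t_i)$ is $\varepsilon$-shadowed, which by Lemma \ref{lemma2} means $\phi$ has the POTP through $K$. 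The main subtlety, and where I would be most careful, is ensuring the constants $\delta_p, \eta_p$ are chosen in the right order so that both the pseudo-orbit inequality at index $-1$ (which requires no continuity) and the one at index $0$ together with the final shadowing transfer (which require continuity on $[0, 2]$) succeed with the same uniform $\delta$; the reduction to bounded step lengths via Lemma \ref{lemma2} is exactly what makes the uniform continuity argument available.
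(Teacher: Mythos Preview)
Your argument is correct and shares the paper's core idea---replace $x_0$ by a nearby point where a shadowing $\delta$ is already in hand, verify the two perturbed pseudo-orbit inequalities at indices $-1$ and $0$, then transfer the shadowing back---but the execution differs. The paper argues by contradiction and sequential compactness: it takes a sequence of $(\tfrac{1}{k},1,2)$-pseudo-orbits through $K$ that fail to be $2\varepsilon$-shadowed, passes to a limit $(\xi_0^k,t_0^k)\to(p,t_0)$, and substitutes both the initial point \emph{and} the initial time. Because the time changes, the partial sums $s_i^k$ and $\hat s_i^k$ no longer coincide, forcing a two-case estimate depending on whether $t$ lands in $[\hat s_i^k,s_{i+1}^k)$ or in $[s_i^k,\hat s_i^k)$. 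Your direct finite-subcover version keeps the times $t_i$ fixed and swaps only the spatial point $x_0\mapsto p_j$, so the partial sums are identical and the transfer on $[0,t_0)$ is a single triangle inequality. This is genuinely cleaner; the only price is that you must choose the constants $\delta_p,\eta_p$ in the right order before covering, which you do correctly. Either route relies on Lemma~\ref{lemma2} to bound the step lengths in $[1,2]$ so that uniform continuity on $X\times[0,2]$ is available.
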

\begin{proof}
Assume by contradiction that there exists a nonempty compact subset $K$ such that every point in $K$ is shadowable but does not have the {\em POTP} through $K$. Then there is $\ep > 0$ and a sequence $(\xi^k)_{k\in\mathbb{N}}=(\xi_n^k,\,t_n^k)_{n\in\mathbb{Z}}$ of $(\frac{1}{k},\,1,\,2)$-pseudo-orbits passing through $K$ which cannot be $2\ep$-shadowed. Since $K$ and $[1,\,2]$ are compact, we can assume that $\xi_0^k\to p$ for some $p\in K$ and $t_0^k\to t_0$ for some time $t_0\in[1,\,2]$. We have that $p$ is shadowable, so for $\ep$ as above, we choose $\delta > 0$ from the shadowableness of $p$ with $\delta < \frac{\ep}{3}$. Since $X\times [0,\,2]$ is compact, $\phi|_{X\times [1,\,2]}$ is uniformly continuous and so our $\delta$ can also be chosen so that if $d((x,\, t),\,(y,\,s))\leq \delta$ with $0\leq s,\,t\leq 2$ then $d(\phi_t(x),\,\phi_s(y))\leq \frac{\ep}{3}$. We set a sequence $\hat{\xi}^k=(\hat{\xi}_n^k,\,\hat{t}_n^k)_{n\in\mathbb{Z}}$ as follows,
        
        \[
          \hat{\xi}^k =
            \begin{cases}
              (\xi^k_n,\, t_n^k), &\mbox{if } n\neq0,\\
              (p,\,t_0),        &\mbox{if } n = 0.
            \end{cases}
        \]
Clearly all such sequences are passing through $p$. Moreover,
     \[
          d(\phi_{\hat{t}_n^k}(\hat{\xi}_n^k),\, \hat{\xi}_{n+1}^k) =
            \begin{cases}
              d(\phi_{t_n^k}(\xi_n^k),\, \xi_{n+1}^k), &\mbox{if } n\neq0,\,-1,\\
              d(\phi_{t_0}(p),\,\xi_1^k),        &\mbox{if } n = 0,\\
              d(\phi_{t_{-1}^k}(\xi_1^k),\,p),    &\mbox{if } n = -1,
            \end{cases}
        \]
so   
       \[
          d(\phi_{\hat{t}_n^k}(\hat{\xi}_n^k),\, \hat{\xi}_{n+1}^k) \leq
            \begin{cases}
              \frac{1}{k}, &\mbox{if } n\neq0,\,-1,\\
              d(\phi_{t_0}(p),\,\phi_{t_0^k}(\xi_0^k))+\frac{1}{k},        &\mbox{if } n = 0,\\
              d(\xi_{0}^k,\,p)+\frac{1}{k},    &\mbox{if } n = -1.
            \end{cases}
        \]       
As $\phi$ is continuous and $(\xi_0^k,\, t_0^k)\to (\xi_0,\,p)$ we obtain that $(\hat{\xi}_n^k)$ is a $(\delta,\,1,\,2)$-pseudo-orbit for $k$ large. Then for such $k$ it follows that there are $x_k\in X$ and $h\in \mathrm{Rep}$  such that $d(p\star t,\,\phi_{h(t)}(x_k))\leq\ep$ for all $t\in\mathbb{R}$.  For the sequences $(\hat{t}_i^k)_{k\in\mathbb{Z}}$ and $(t_i^k)_{k\in\mathbb{Z}}$ we write
       
       \[\hat{s}_i^k =
      \begin{cases}
        \displaystyle{\sum_{j=0}^{i-1}} \hat{t}_j^k & i > 0,\\
		    0				             & i = 0, \\
		    - \displaystyle{\sum_{j=i}^{-1}} \hat{t}_j^k & i < 0,
      \end{cases}
    \]
and 
   \[s_i^k =
      \begin{cases}
         \displaystyle{\sum_{j=0}^{i-1}} t_j^k & i > 0,\\
		    0				             & i = 0, \\
		    - \displaystyle{\sum_{j=i}^{-1}} t_j^k & i < 0.
      \end{cases}
    \]
We will consider the three possible cases: $t_0^k < \hat{t}_0^k$ , $t_0^k = \hat{t}_0^k$ and $t_0^k > \hat{t}_0^k$. Note that in every case we have $|s_i^k - \hat{s}_i^k| = |t_0^k - \hat{t}_0^k|$ for all $i\in\mathbb{Z}$. We consider only the $t_0^k < \hat{t}_0^k$ case being the other two cases analogous. Then $s_i^k < \hat{s}_i^k$ for all $i\in\mathbb{Z}$. Let $t\in \mathbb{R}$ and let $i\in\mathbb{Z}$ such that $s_i^k\leq  t < s_{i+1}^k$. We have two cases:

\emph{Case 1:} if $\hat{s}_i^k\leq  t < s_{i+1}^k$,  then in particular $\hat{s}_i^k\leq  t < \hat{s}_{i+1}^k$, so
    \begin{align*}
    d(\phi_{t-s_i^k}(\xi_i^k),\,\phi_{h(t)}(x_k))&\leq d(\phi_{t-s_i^k}(\xi_i^k),\,\phi_{t-\hat{s}_i^k}(\hat{\xi}_i^k)) + d(\phi_{t-\hat{s}_i^k}(\hat{\xi}_i^k),\,\phi_{h(t)}(x_k))\\
 &\leq\frac{\ep}{3} + \ep < 2\ep.
\end{align*}  

\emph{Case 2:} if $s_i^k\leq t < \hat{s}_i^k$, again in particular, $\hat{s}_{i-1}^k\leq t < \hat{s}_i^k$, so
    \begin{align*}
    d(\phi_{t-s_i^k}&(\xi_i^k),\,\phi_{h(t)}(x_k)) \leq  d(\phi_{t-s_i^k}(\xi_i^k),\,\phi_{t-\hat{s}_{i-1}^{k}}(\hat{\xi}_{i-1}^{k})) + d(\phi_{t-\hat{s}_{i-1}^{k}}(\hat{\xi}_{i-1}^{k}),\,\phi_{h(t)}(x_k))\\
    &\leq  d(\phi_{t-s_i^k}(\xi_i^k),\,\hat{\xi}_i^k) + d(\hat{\xi}_i^k,\,\phi_{\hat{t}_{i-1}^k}(\hat{\xi}_{i-1}^k)) + d(\phi_{\hat{t}_{i-1}^k}(\hat{\xi}_{i-1}^k),\,\phi_{t-\hat{s}_i^{k-1}}(\hat{\xi}_{i-1}^{k})) + \ep\\
    &\leq \frac{\ep}{3} + \frac{\ep}{3} + \frac{\ep}{3} + \ep = 2\ep,
    \end{align*}
thus $d(\phi_{t-s_i^k}(\xi_i^k),\,\phi_{h(t)}(x_k))\leq 2\ep$ for all $s_i^k\leq  t < s_{i+1}^k$. It follows that $\xi^k$ can be $2\ep$-shadowed, which is a contradiction. This proves the result. 
    \end{proof}

\begin{lemma}\label{lemma1}
Given  a flow $\phi$ in a compact metric space $(X,\, d)$, then  $Sh(\phi)$ is invariant. So, if not empty, it is a union of orbits of $\phi$.
\end{lemma}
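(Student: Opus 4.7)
The plan is to show that for any $p \in Sh(\phi)$ and any $s \in \mathbb{R}$, the point $\phi_s(p)$ lies in $Sh(\phi)$; the second assertion then follows immediately, since invariance forces $Sh(\phi)$ to contain the entire orbit of each of its points. The strategy is to convert a pseudo-orbit through $\phi_s(p)$ into one through $p$ by translating it termwise by $\phi_{-s}$, shadow the translated pseudo-orbit using the hypothesis, and finally translate the shadowing orbit forward by $\phi_s$.

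Concretely, given $\ep > 0$, I would first use the uniform continuity of $\phi_s$ on the compact space $X$ to choose $\ep' > 0$ with $d(a,b) \leq \ep' \Rightarrow d(\phi_s(a), \phi_s(b)) \leq \ep$. The hypothesis $p \in Sh(\phi)$ supplies $\delta_0 > 0$ such that every $(\delta_0, 1)$-pseudo-orbit through $p$ admits an $\ep'$-shadow. By uniform continuity of $\phi_{-s}$, pick $\delta \in (0, \delta_0]$ with $d(a,b) \leq \delta \Rightarrow d(\phi_{-s}(a), \phi_{-s}(b)) \leq \delta_0$. Given any $(\delta, 1)$-pseudo-orbit $(y_i, t_i)_{i \in \mathbb{Z}}$ through $\phi_s(p)$, set $x_i = \phi_{-s}(y_i)$ while keeping the time lengths $t_i$ unchanged. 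Using that flow maps commute, $\phi_{t_i}(x_i) = \phi_{-s}(\phi_{t_i}(y_i))$ lies within $\delta_0$ of $x_{i+1} = \phi_{-s}(y_{i+1})$, and $x_0 = p$, so $(x_i, t_i)_{i \in \mathbb{Z}}$ is a $(\delta_0, 1)$-pseudo-orbit through $p$. Shadowability of $p$ then yields $q \in X$ and $h \in \mathrm{Rep}$ with $d(x_0 \star t, \phi_{h(t)}(q)) \leq \ep'$ for every $t \in \mathbb{R}$.

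The main point, and the step whose verification I expect to take the most care, is the identity $x_0 \star t = \phi_{-s}(y_0 \star t)$ for every $t \in \mathbb{R}$. This relies on the fact that the two pseudo-orbits share the same time sequence and hence the same partial sums $(s_i)$: on the interval $s_i \leq t < s_{i+1}$ one has
\[
x_0 \star t \;=\; \phi_{t - s_i}(x_i) \;=\; \phi_{t - s_i}(\phi_{-s}(y_i)) \;=\; \phi_{-s}(\phi_{t - s_i}(y_i)) \;=\; \phi_{-s}(y_0 \star t).
\]
Combining this with the shadowing estimate and the choice of $\ep'$ yields $d(y_0 \star t, \phi_s(\phi_{h(t)}(q))) \leq \ep$ for every $t$. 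Setting $q' = \phi_s(q)$ and using $\phi_s \circ \phi_{h(t)} = \phi_{h(t)} \circ \phi_s$ rewrites the right-hand point as $\phi_{h(t)}(q')$, so the pseudo-orbit through $\phi_s(p)$ is $\ep$-shadowed by $(q', h)$, with the same reparameterization $h$. Hence $\phi_s(p) \in Sh(\phi)$, which simultaneously establishes invariance and the decomposition of $Sh(\phi)$ into orbits.
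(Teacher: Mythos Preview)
Your proof is correct and follows essentially the same approach as the paper: translate the pseudo-orbit through $\phi_s(p)$ back by $\phi_{-s}$ using uniform continuity, shadow it at $p$, and then translate the shadowing orbit forward by $\phi_s$. Your write-up is in fact slightly more explicit than the paper's in verifying the key identity $x_0\star t=\phi_{-s}(y_0\star t)$, but the argument is the same.
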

\begin{proof}
Let $x$ be a shadowable point of $X$. Let $\ep > 0$ and $s\in\mathbb{R}$ given. Since $\phi_s$ is uniformly continuous, so we can choose $0 <\ep' < \ep$ such that whenever $d(x,\,y) < \ep'$ we have $d(\phi_s(x),\,\phi_s(y))<\ep$. For $\ep'$, let  $\delta>0$ such that any  $(\delta,\, 1)$-pseudo-orbit passing through $x$ can be $\ep'$-shadowed. Similarly, $\phi_{-s}$ is uniformly continuous so we can choose $\delta' > 0$ with the property that $d(\phi_{-s}(x),\, \phi_{-s}(y))\leq \delta$ whenever $d(x,\, y) < \delta'$. Now let $(x_i,\, t_i)_{i\in \mathbb{Z}}$ be a $(\delta',\, 1)$-pseudo-orbit passing through $\phi_s(x)$. Because $d(\phi_{\,t_i}(x_i),\, x_{i+1}) \leq \delta'$ we have by the choice of $\delta'$ that \[d(\phi_{-s}(\phi_{t_i}(x_i)),\, \phi_{-s}(x_{i+1})) = d(\phi_{t_i}(\phi_{-s}(x_i)),\, \phi_{-s}(x_{i+1})) \leq \delta\] and hence $(\phi_{-s}(x_i), t_i)_{i\in \mathbb{Z}}$ is a $(\delta,\, 1)$-pseudo-orbit passing through $x$. By definition, there are $h\in \mathrm{Rep}$ and $y\in X$ such that
\[d(x\star t, \phi_{h(t)}(y)) \leq \ep',\ \mbox{for every}\ t\in\mathbb{R}.\]
Then, if $s_i\leq t< s_{i+1}$, it follows that $d(\phi_{t-s_i}(\phi_{-s}(x_i)),\,\phi_{h(t)}(y))\leq\ep'$ for every $t\in\mathbb{R}$ which implies $d(\phi_{t-s_i}(x_i), \phi_{h(t)}(\phi_s(y)))\leq \ep$. Therefore, $d(x_0\star t, \phi_{h(t)}(\phi_s(y)))\leq \ep$ for each $t\in \mathbb{R}$. Thus, every $(\delta',\,1)$-orbit passing through $\phi_s(x)$ can be $\ep$-shadowed by a point in $X$. This completes the proof.
\end{proof}

\begin{lemma}\label{lemma4}
If $\phi$ is a flow on a compact metric space $X$, then $Sh(\phi)\cap CR(\phi) \subseteq \Omega(\phi)$.
\end{lemma}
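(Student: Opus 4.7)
My plan is to take an arbitrary $p \in Sh(\phi) \cap CR(\phi)$ and show that for any neighborhood $U$ of $p$ and any $T > 0$, there exists $t \geq T$ with $\phi_t(U) \cap U \neq \emptyset$. First I would pick $\ep > 0$ small enough that $B[p,\ep] \subset U$. Applying shadowability at $p$ to this $\ep$ yields a $\delta > 0$ such that every $(\delta,1)$-pseudo-orbit through $p$ can be $\ep$-shadowed. Applying chain recurrence $p\sim p$ with parameters $(\delta,1)$ then produces a nontrivial $(\delta,1)$-chain $(x_0,t_0),\dots,(x_{m-1},t_{m-1}),x_m$ with $x_0 = x_m = p$, each $t_i \geq 1$, and $d(\phi_{t_i}(x_i),x_{i+1}) \leq \delta$.

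Next I would periodically concatenate this chain to form a bi-infinite $(\delta,1)$-pseudo-orbit $(\xi_i,\tau_i)_{i\in\mathbb{Z}}$ passing through $p$, defined by $\xi_{km+j}=x_j$ and $\tau_{km+j}=t_j$ for $0\leq j<m$ and $k\in\mathbb{Z}$. By the choice of $\delta$, there exist $y \in X$ and $h \in \mathrm{Rep}$ with $d(\xi_0\star t,\phi_{h(t)}(y)) \leq \ep$ for every $t\in\mathbb{R}$. Letting $S = \sum_{i=0}^{m-1} t_i$ (the period length), the construction gives $\xi_0\star(nS) = p$ for every $n\in\mathbb{Z}$, since $s_{nm} = nS$ and $\xi_{nm} = p$. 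Evaluating the shadowing estimate at $t=0$ and $t=nS$ yields $d(p,y)\leq\ep$ and $d(p,\phi_{h(nS)}(y))\leq\ep$, so both $y$ and $\phi_{h(nS)}(y)$ lie in $B[p,\ep]\subset U$ for every $n\in\mathbb{N}$.

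Finally, because $h\colon\mathbb{R}\to\mathbb{R}$ is a surjective strictly increasing map, $h(nS)\to+\infty$ as $n\to\infty$, so I can choose $n$ with $h(nS)\geq T$. Setting $t=h(nS)$, the point $\phi_t(y)$ belongs to $U\cap\phi_t(U)$, which proves $p\in\Omega(\phi)$. I expect the main (minor) obstacle to be the bookkeeping around $\xi_0\star(nS)$: one must check that with the chosen indexing the partial sums $s_i$ for the concatenated pseudo-orbit satisfy $s_{nm}=nS$ and hence $\xi_0\star(nS)=\phi_0(\xi_{nm})=p$, so that the shadowing inequality produces a returning orbit at the desired times rather than some nearby time at which the tracking point might escape $U$.
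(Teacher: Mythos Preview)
Your proposal is correct and follows essentially the same approach as the paper: periodically extend a $(\delta,1)$-chain from $p$ to $p$ into a bi-infinite pseudo-orbit through $p$, $\ep$-shadow it, and use that the shadowing point $y$ lies in $B[p,\ep]$ and returns to $B[p,\ep]$ at the times $h(nS)\to\infty$. The only cosmetic difference is that you fix the neighborhood $U$ and threshold $T$ at the outset, whereas the paper simply exhibits returns at arbitrarily large times; the bookkeeping you flag ($s_{nm}=nS$, $\xi_0\star(nS)=p$) is handled exactly as you describe.
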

\begin{proof}
Let $p\in Sh(\phi)\cap CR(\phi)$  and $\ep > 0$ be given. Then there exists $\delta > 0$ from the shadowableness of $p$. Since $p$ is a chain recurrent point, there exists a $(\delta,\,1)$-chain $(x_i,\, t_i)_{i = 0}^k$ with $p = x_0 = x_k$. For every integer number $n$ we put $x_{kn+i} = x_i$, $t_{kn +i} = t_i$ for $0\leq i < k$.  So, $(x_i,\, t_i)_{i\in\mathbb{Z}}$ is a $(\delta,\,1)$-pseudo-orbit for $\phi$ and therefore there are $y\in X$ and $g\in\mathrm{Rep}$  such that $d(p\star t,\,\phi_{g(t)}(y))\leq\ep$ for every $t\in \mathbb{R}$. It follows that $y\in B[p,\,\ep]$ because $g(0) = 0$ by definition. For every $j \geq 0$ make $m_j = j\sum_{i=0}^{k-1}t_i$. Then,
\begin{align*}
d(x,\,\phi_{g(m_j)}(y)) &= d(x\star m_j, \phi_{g(m_j)}(y))\\
						&\leq\ep,\ \forall\,j\geq0
\end{align*}
and  $m_j\geq jk$ for all $j\geq 0$. So $m_j\to\infty$ ($j\to\infty$). Since $g\in\mathrm{Rep}$, $g(m_j)\to\infty$. Therefore $x\in\Omega(\phi)$, and the lemma follows.
\end{proof}

\begin{lemma}\label{3.55}
If $\phi$ is a expansive flow on a compact metric space $X$, then $CR(\phi)\cap Sh(\phi)\subset \overline{Per(\phi)}$.
\end{lemma}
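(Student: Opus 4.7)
The plan is to produce, for each $\varepsilon > 0$, a periodic point of $\phi$ within $\varepsilon$ of $p$; the strategy is to build a doubly-infinite periodic pseudo-orbit through $p$ by concatenating a chain from $p$ to itself, shadow it, and then use expansivity to upgrade the shadowing orbit's near-periodicity to exact periodicity.

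First I would invoke expansivity with tolerance $\varepsilon$ to obtain $\delta_e > 0$, and then apply shadowability of $p$ at level $\min(\delta_e,\varepsilon)/2$ to obtain a corresponding $\delta > 0$. Using $p \sim p$, I would extract a $(\delta,1)$-chain $(x_i, t_i)_{i=0}^{N-1}$ with $x_0 = x_N = p$, and set $T_0 = \sum_{i=0}^{N-1}t_i$; extending this chain periodically in both directions yields a $(\delta,1)$-pseudo-orbit $(x_n, t_n)_{n\in\mathbb{Z}}$ passing through $p$ that satisfies $x_0\star(t + T_0) = x_0 \star t$ for every $t\in\mathbb{R}$.

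Then shadowability produces $y\in X$ and $h\in \mathrm{Rep}$ with $d(x_0 \star t,\phi_{h(t)}(y)) \leq \min(\delta_e,\varepsilon)/2$ for all $t$; in particular $d(p,y) < \varepsilon$. Applying this inequality at $t$ and at $t + T_0$ and using the periodicity of the pseudo-orbit gives $d(\phi_{h(t)}(y), \phi_{h(t+T_0)}(y)) \leq \delta_e$ for every $t$. The map $g(w) := h(h^{-1}(w) + T_0) - h(T_0)$ lies in $\mathrm{Rep}$, and substituting $w = h(t)$ rewrites the previous estimate as $d(\phi_w(y), \phi_{g(w)}(\phi_{h(T_0)}(y))) \leq \delta_e$ for all $w\in\mathbb{R}$. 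Expansivity then yields $\phi_{h(T_0)}(y) = \phi_s(y)$ for some $s\in (-\varepsilon, \varepsilon)$, and hence $\phi_{h(T_0)-s}(y) = y$.

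Setting $\tau := h(T_0) - s$, the desired conclusion $p\in\overline{Per(\phi)}$ follows once $\tau > 0$, since then $y$ is periodic and already $d(y,p) < \varepsilon$. The main obstacle is the degenerate case $\tau = 0$: in principle the reparameterization $h$ could crush $T_0$ down to the value $s\in(0,\varepsilon)$. I would circumvent this either by iterating the chain (replacing $T_0$ by $kT_0$ for $k$ copies) and using compactness of $X$ to extract a nontrivial period in the limit, or by choosing the chain so that it leaves a small neighborhood of $p$: if $h(T_0)<\varepsilon$ were small, uniform continuity of $\phi$ on $X\times [0,\varepsilon]$ would confine the shadowing orbit $\phi_{h([0,T_0])}(y)$ to a small ball around $y$, forcing all the chain points $x_i$ to lie near $p$, which can be ruled out as long as the chain-transitive component of $p$ has positive diameter.
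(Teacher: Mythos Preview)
Your argument is the paper's: periodically extend a chain from $p$ to itself, shadow it, and use expansivity to close up the shadowing orbit. The only difference lies in ruling out $\tau = h(T_0) - s \leq 0$. The paper first reduces (via expansivity) to flows without singularities---a step you omit but implicitly need---and then quotes Lemma~3.10 of \cite{Hel}, which for such flows bounds the shadowing reparametrization from below on some interval, forcing $\alpha(L) \geq 1 > \varepsilon$ directly. Your iteration idea is sound and in fact simpler than you indicate: the \emph{same} $y,h$ give $\phi_{h(kT_0)}(y) \in \phi_{(-\varepsilon,\varepsilon)}(y)$ for every $k \geq 1$, and since $h \in \mathrm{Rep}$ is a homeomorphism of $\mathbb{R}$ one has $h(kT_0) \to +\infty$, so eventually $h(kT_0) > \varepsilon$; no compactness-of-$X$ limit argument is required. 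Your second route works too, but note that ``the chain component of $p$ has positive diameter'' is precisely the statement that $p$ is not a singularity, so the no-singularities reduction is what does the work in either alternative.
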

\begin{proof}
Without loss of generality, by expansiveness, we can suppose that the flow has no singularities. Let $x\in CR(\phi)\cap Sh(\phi)$ and $\varepsilon\in (0,1)$. We can consider that $\varepsilon$ satisfies Lemma 3.10 in \cite{Hel} with respect to $\lambda=\frac{1}{2}$. Take $\delta>0$ satisfying the definition of shadowing respect to $\varepsilon$. Since $x\in CR(\phi)$, there is a $(\frac{\delta}{2},\, 3)$-chain $(x_i,\,t_i)_{i=1}^{k}$ where $x_0=x_k=x$ and $t_i\geq 3$. Assume that $\delta$ comes from expansivity with respect to $\varepsilon$. Extend the $(\frac{\delta}{2},\, 3)$-chain $(x_i,\,t_i)_{i=1}^{k}$ to a $(\frac{\delta}{2},\, 3)$-pseudo-orbit. Thus, there are $z\in X$ and $\alpha\in\mathrm{Rep}$ such that $d(\phi_{\alpha(t)}(z),\,\phi_{t-s_i}(x_i))\leq \frac{\delta}{2}$ for $s_i\leq t<s_{i+1}$. If $L=t_0+\ldots+t_{k-1}$, then $d(\phi_{\alpha(t+L)}(z),\,\phi_{t-s_i}(x_i))\leq \frac{\delta}{2}$ for $s_i\leq t<s_{i+1}$. Therefore
\begin{center}
$d(\phi_{\alpha(t+L)}(z),\phi_{\alpha(t)}(z)\leq \delta$ for every $t\in \mathbb{R}$. 
\end{center}
Take $u=\alpha(t)$, then
$$d(\phi_{\alpha(\alpha^{-1}(u)+L)}(z),\,\phi_{u}(z))=d(\phi_{\alpha(\alpha^{-1}(u)+L)-\alpha(L)}(\phi_{\alpha(L)}(z)),\,\phi_{u}(z))\leq \delta$$ for every $u\in \mathbb{R}$, where $\alpha(\alpha^{-1}(u)+L)-\alpha(L)\in \mathrm{Rep}$. Hence $\phi_{\alpha(L)}(z)\in \phi_{(-\varepsilon,\varepsilon)}(z)$. 

Moreover since $d(\phi_{\alpha(t)}(z),\,\phi_{t}(x))\leq \frac{\varepsilon}{2}$ for $0\leq t<t_0$, then $\frac{1}{2}s\leq \alpha(s)$  for some $2\leq s\leq t_0$, by Lemma 3.10 in \cite{Hel}. Then $\varepsilon\leq \alpha(s)\leq\alpha(L)$ since $s\leq L$. Therefore $z\in Per(\phi)$. 
\end{proof}

Now we introduce another auxiliary definition.

\begin{definition}\label{3.6}
Given $\ep>0$ and $Z$ a subset of $X$. We say that a flow $\phi$ has the POTP through a subset $K$ if there exists $\delta > 0$ such that every  $(\delta,\, 1)$-pseudo-orbit passing through $K$ can be $\ep$-shadowable.
\end{definition}

\begin{lemma}\label{lemma3.7}
Let $\phi$ be a flow  on the compact metric space $X$ and let $\ep>0$. If the flow $\phi$ has the POTP through a compact subset $K$, then there is $\delta > 0$ so that $\phi$ has the $2\ep$-POTP through $B[K,\,\delta]$.
\end{lemma}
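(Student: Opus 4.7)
The plan is to reduce the $2\ep$-shadowing of pseudo-orbits through $B[K,\delta]$ to the $\ep$-shadowing of pseudo-orbits through $K$ by perturbing only the base point $x_0$ to a nearby $p\in K$; the key technical maneuver is to work with $(\cdot,1,2)$-pseudo-orbits via Lemma \ref{lemma2}, so that all uniform-continuity estimates live on the compact time interval $[0,2]$.

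Concretely, I would start by applying Lemma \ref{lemma2} to the hypothesis to obtain $\delta_1>0$ such that every $(\delta_1,1,2)$-pseudo-orbit through $K$ is $\ep$-shadowed. Then, by uniform continuity of $\phi$ on the compact set $X\times[0,2]$, I pick $\delta>0$ with $\delta\leq\delta_1/2$ and with the property that $d(x,y)\leq\delta$ implies $d(\phi_t(x),\phi_t(y))\leq \min(\delta_1/2,\ep)$ for every $t\in[0,2]$. Given a $(\delta,1,2)$-pseudo-orbit $(x_i,t_i)_{i\in\mathbb{Z}}$ with $x_0\in B[K,\delta]$, choose $p\in K$ with $d(x_0,p)\leq\delta$ and form the modified sequence $\hat{x}_0=p$, $\hat{x}_i=x_i$ for $i\neq 0$, keeping the same $t_i$. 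The only affected pseudo-orbit conditions are at $i=0$ and $i=-1$; both are bounded by $\delta_1/2+\delta\leq\delta_1$ using the triangle inequality together with the modulus selected above. Hence $(\hat{x}_i,t_i)$ is a $(\delta_1,1,2)$-pseudo-orbit through $K$, so by hypothesis there exist $y\in X$ and $h\in\mathrm{Rep}$ with $d(\hat{x}_0\star t,\phi_{h(t)}(y))\leq\ep$ for all $t\in\mathbb{R}$.

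To transfer this shadowing back to the original pseudo-orbit, observe that the two sequences share the same $t_i$, so their partial sums $s_i$ coincide and $x_0\star t=\hat{x}_0\star t$ whenever $t\notin[0,t_0)$, which yields the estimate $d(x_0\star t,\phi_{h(t)}(y))\leq\ep\leq 2\ep$ for free on that complement. On $[0,t_0)\subset[0,2]$ the triangle inequality combined with the uniform-continuity modulus gives
\[d(\phi_t(x_0),\phi_{h(t)}(y))\leq d(\phi_t(x_0),\phi_t(p))+d(\phi_t(p),\phi_{h(t)}(y))\leq \ep+\ep=2\ep,\]
completing the $2\ep$-shadowing. A final application of Lemma \ref{lemma2}, now with $B[K,\delta]$ in place of $K$ and $2\ep$ in place of $\ep$, converts this statement about $(\delta,1,2)$-pseudo-orbits into the desired $2\ep$-POTP through $B[K,\delta]$.

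The main subtlety is precisely the need to have bounded return times $t_i$ in the uniform-continuity step: without it, the perturbation cost at $x_0$ could spread unboundedly during the segment $[0,t_0)$, and the bound $d(\phi_t(x_0),\phi_t(p))\leq\ep$ would fail for $t$ deep inside that interval. This is exactly why Lemma \ref{lemma2} is invoked to bracket the whole argument within a controlled time-scale.
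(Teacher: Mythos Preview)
Your proof is correct and follows the same core idea as the paper's: replace $x_0$ by a nearby $p\in K$, $\ep$-shadow the modified pseudo-orbit through $K$, then absorb the perturbation cost on the segment $[0,t_0)$ by an extra $\ep$. The paper packages this by contradiction, taking a sequence of $(\tfrac{1}{k},1)$-pseudo-orbits through $B[K,\tfrac{1}{k}]$ and never bounding the times $t_i$; you argue directly and bracket the argument with Lemma~\ref{lemma2} so as to work only with $(\cdot,1,2)$-pseudo-orbits, which places the uniform-continuity step cleanly on $X\times[0,2]$. Two minor remarks on your write-up: your first appeal to Lemma~\ref{lemma2} is unnecessary, since a $(\delta_1,1,2)$-pseudo-orbit is already a $(\delta_1,1)$-pseudo-orbit and the hypothesis shadows it directly; and your final appeal is really to the $(1)\Rightarrow(2)$ step in the \emph{proof} of Lemma~\ref{lemma2} with $a=1$ (the statement itself is a ``for all $\ep$'' equivalence), but that refinement step is $\ep$-for-$\ep$ and $\delta$-for-$\delta$---any $(\delta,1)$-pseudo-orbit through $B[K,\delta]$ splits into a $(\delta,1,2)$-pseudo-orbit with the same base point---so the conclusion is legitimate.
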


\begin{proof}
Suppose by contradiction that a flow $\phi$ has the {\em POTP} through a subset $K$ but for every $\delta > 0$, we can find a $(\delta,\,1)$-pseudo-orbit passing through $B[K,\,\delta]$ that cannot be $2\ep$-shadowed.

Take a $\delta>0$ from the {\em POTP} through $K$ with $\delta <\ep$, and let $(\xi^k)_{k\in\mathbb{N}}$ be a sequence of $(\frac{1}{k},\, 1)$-pseudo-orbits passing through $B[K,\,\frac{1}{k}]$ which cannot be $2\ep$-shadowed. For every $k\in\mathbb{N}$ we write $\xi^k = (\xi_n^k,\, t_n^k)_{n\in\mathbb{Z}}$. It follows from the definition that there is a sequence $x_k\in K$ such that $d(\xi_0^k, x_k)\leq\frac{1}{k}$ for all $k\in\mathbb{N}$. Since $X$ is compact, the flow $\phi$ is uniformly continuous in \sloppy $X\times [-t^1_0,\,t^1_0]$, so we can choose $k$ with the property that \[\max\{\max_{-t_0^1\leq t\leq t_0^1}\{d(\phi_t(\xi_0^k),\,\phi_t(x_k))\},\,\tfrac{1}{k}\}\leq\frac{\delta}{2}.\]
Fix $k$ and define a sequence ${\xi} = ({\xi}_n,\,t_n)_{n\in\mathbb{Z}}$ by
\[
({\xi}_n,\,t_n) = 
\begin{cases}
(\xi_n^k,\,t_n^k) &\mbox{if } n\neq0,\\
(x_k,\, t_0^k) &\mbox{otherwise}.
\end{cases}
\]
Clearly, $d(\phi_{t_n}(\xi_n),\,\xi_{n+1})\leq\frac{1}{k} < \delta$ for $n\neq -1,\,0$. Since

\[
d(\phi_{t_{-1}}(\xi_{-1}),\,\xi_0) = d(\phi_{t_{-1}^k}(\xi_{-1}^k),\,x_k)\leq
			d(\phi_{t_{-1}^k}(\xi_{-1}^k),\, \xi_0^k) + d(x_k,\,\xi_0^k)\leq
			\tfrac{1}{k}+\tfrac{1}{k}=\frac{2}{k}\leq\delta
\]
and 

\[
d(\phi_{t_0}(\xi_0),\,\xi_1) = d(\phi_{t_0^1}(x_k),\,\xi_1^k)\leq
		d(\phi_{t_0^1}(x_k),\,\phi_{t_0^1}(\xi_0^k))+ d(\phi_{t_0^1}(\xi_0^k),\,\xi_1^k)
		\leq \frac{\delta}{2}+\frac{\delta}{2}=\delta,
\]
we see that $\xi$ is a $(\delta,\, 1)$-pseudo-orbit. Since $\xi_0 = x_k\in K$ by definition, we obtain that $\xi$ can be $\ep$-shadowed by a point $y\in X$. Thus, there exists $h\in \mathrm{Rep}$ such that $$d(\xi_0\star t,\,\phi_{h(t)}(y))<\ep,\quad \mbox{for each}\ t\in \mathbb{R}.$$
Note that for $i\neq 0$ and $t$ such that $s_i\leq t < s_{i+1}$, we have $$\xi_0\star t=\phi_{t-s_i}(\xi_i)=\phi_{t-s_i}(\xi_i^k)=\xi^k_0\star t.$$ 
Hence $\xi_0\star t = \xi_0^k\star t$ for $t\not \in [s_0,\, s_{1})$. Furthermore, for $t\in[s_0,\,s_1)$, 
\begin{align*}
d(\xi_0^k\star t,\,\phi_{h(t)}(y))
	&\leq d(\xi_0^k\star t,\,\xi_0\star t) + d(\xi_0\star t),\,\phi_{h(t)}(y)) \\
	&\leq d(\phi_t(\xi_0^k),\,\phi_t(\xi_0)) + d(\xi_0\star t,\,\phi_{h(t)}(y))\\
	&\leq\frac{\delta}{2} + \ep\\
	&\leq 2\ep.
\end{align*}
Thus, $d(\xi_0^k\star t,\,\phi_{h(t)}(y))\leq 2\ep$ for all $t\in \mathbb{R}$. It follows that $\xi^k$ is $2\ep$-shadowed, which is a contradiction. This proves the result.

\end{proof}


\section{Proofs}

\begin{proof}[Proof of Theorem \ref{teo4}]
To prove Item (a), by Lemma \ref{lemma1} we have that $Sh(\phi)$ is invariant, example \ref{examplechivo} shows that this set can be noncompact and Corollary \ref{lorenz} shows that it can be empty.
Item (b) follows by making $K = X$ in Lemma \ref{lemma3} for  we that $\phi$ has the {\em POTP} if and only if $Sh(\phi) = X$. 

Item (c) follows since $\Omega(\phi)\subset CR(\phi)$, then we have that if $CR(\phi)\subseteq Sh(\phi)$, then $\Omega(\phi) =  CR(\phi)$ by Lemma \ref{lemma4}. 

Similarly, Item (d) follows since $Per(\phi)\subseteq CR(\phi)$, then $CR(\phi)=\overline{Per(\phi)}$ by Lemma \ref{3.55}. 

To prove Item (e), let $f\colon X\to Y$ be an equivalence between $\phi$ and $\psi$ on the metric spaces $(X,\,d_x)$ and $(Y,\,d_y)$ respectively. Suppose that for each $x\in X$ there exists $h_x\in \mathrm{Rep}$ such that $$f^{-1}(\psi(f(x),\,h_x(t)))=\phi(x,\,t),\ \mbox{for each}\ t\in \mathbb{R}.$$ Let $a=\min \{ h_x(1): x\in X\}$. By compactness of $X$ such $a$ exists and indeed is positive. Now, given $\ep>0$, choose $\ep^{\prime}>0$ such that $d_y(y_1,y_2)<\ep^{\prime}$ implies $d_x(f^{-1}(y_1),f^{-1}(y_2))<\ep$ for every $y_1,y_2\in Y$. Suppose $p\in f^{-1}(Sh(\psi))$. By Lemma \ref{lemma2}, there exists $\delta^{\prime}>0$ such that each $(\delta^{\prime},a)$-pseudo-orbit passing through $f(p)$ can be $\ep^{\prime}$-shadowed by an orbit of $\psi$. Also choose $\delta>0$ so that $d_y(f(x_1),f(x_2))<\delta^{\prime}$ whenever $d_x(x_1,x_2)<\delta$ for all $x_1,x_2\in X$. Now let $(x_n,t_n)_{n\in\mathbb{Z}}$ be a $(\delta,1)$-pseudo-orbit for $\phi$ passing through $p$. Then $d_y(f(\phi_{t_n}(x_n)),f(x_{n+1}))<\delta^{\prime}$. By definition of conjugacy we have $$d_y(\psi_{h_{x_n}(t)}(f(x_n)),f(x_{n+1}))<\delta^{\prime},\quad \mbox{for every}\ n\in\mathbb{Z}.$$
Consider the sequence $(f(x_n),h_{x_n}(t_n))_{n\in\mathbb{Z}}$. Since $t_n\geq 1$ it follows that $h_{x_n}(t_n)\geq a$ for all $n\in \mathbb{Z}$. So $(f(x_n),h_{x_n}(t_n))_{n\in\mathbb{Z}}$ is a $(\delta^{\prime},a)$-pseudo-orbit for $\psi$ passing through $f(p)$. Then there are $y=f(z)$ in $Y$ and $\alpha \in \mathrm{Rep}$ such that $$d_y(f(x_0)\star t, \psi_{\alpha(t)}(y))<\ep^{\prime},\ \mbox{for all}\ t\in\mathbb{R}.$$
It follows that
\begin{equation}\label{jojo}
d_x\left(f^{-1}(f(x_0)\star t), f^{-1}(\psi_{\alpha(t)}(f(z)))\right)<\ep,\quad \mbox{for all}\ t\in\mathbb{R}.  
\end{equation}
Fix $t\in\mathbb{R}$ and $n\in \mathbb{Z}$ such that $s_n \leq t<s_{n+1}$. Since $h_{x_n}\in\mathrm{Rep}$ we have $0\leq h_{x_n}(t-s_n)<h_{x_n}(t_n)$. Therefore, if we denote $\widehat{s}_n=\sum^{n-1}_{j=0} h_{x_{j}}(t_{j})$ for $n\geq 1$ and $\widehat{s}_n=-\sum^{-1}_{j=n} h_{x_{j}}(t_{j})$ for $n\leq 0$ we have $\widehat{s}_n\leq h_{x_n}(t-s_n)+\widehat{s}_n<\widehat{s}_{n+1}$. Take $\widehat{t}=h_{x_n}(t-s_n)+\widehat{s}_n$, so $\psi^{\widehat{t}}(f(x_0))=\psi_{h_{x_n}(t-s_n)}(f(x_n))$. By (\ref{jojo}) it follows for $t=\widehat{t}$ that
$$d_x(\phi_{t-s_n}(x_n),\phi_{h_{z}^{-1}(\alpha(\,\widehat{t}\ ))}(z))=d_x(f^{-1}(\psi_{\,\widehat{t}-\widehat{s}_n}(f(x_n))), f^{-1}(\psi_{\alpha(\,\widehat{t}\,\,)}(f(z))))<\ep.$$
This implies that if we define $\widehat{\alpha}(t)=h_{z}^{-1}(\alpha(h_{x_n}(t-s_n)+\widehat{s}_n))$ we have
$$d_x(x_0\star t,\phi_{\widehat{\alpha}(t)}(z))<\ep, \quad \mbox{for every}\  t\in\mathbb{R}.$$
Since $t\mapsto h_{x_n}(t-s_n)+\widehat{s}_n$ is increasing, then $\widehat{\alpha}\in \mathrm{Rep}$. Therefore, $f^{-1}(Sh(\psi))\subseteq Sh(\phi)$. The inclusion $f(Sh(\phi))\subseteq Sh(\psi)$ is obtained analogously considering the equivalence $f^{-1}\colon Y\to X$. This completes the proof.
\end{proof}

\begin{proof}[Proof of Theorem \ref{suspe1}]
	We can assume without loss of generality that $\tau\equiv 1$. Given $(z,t)\in Sh(\phi^{1,f})$ since $Sh(\phi^{1,f})$ is invariant by $\phi^{1,f}$, then $(z,\frac{1}{2})\in Sh(\phi^{1,f})$. Let $\ep>0$ be given. Choose $\ep^{\prime}>0$ with $\ep^{\prime}<\min\{\ep,\frac{1}{4}\}$ so that $d(f^i(x),f^i(y))<\ep$ for $i=-1,0,1$, whenever $d(x,y)<\ep^{\prime}$. Choose $\delta>0$ from the definition of shadowable point for $\phi^{1,f}$ with respect to $\ep^{\prime}$. Also take $0<\delta^{\prime}<\delta$ so that $d(x,y)<\delta^{\prime}$ implies $d(f(x),f(y))<\delta$. Let $\{ x_n\}_{n\in \mathbb{Z}}$ be any $\delta^{\prime}$-pseudo-orbit of $f$ with $x_0=z$. Consider the pair of sequences $(x_n,\frac{1}{2})_{n\in \mathbb{Z}}$ and $(t_n)_{n\in \mathbb{Z}}$ such that $t_n=1$ for each $n\in \mathbb{Z}$. Then  
	\begin{eqnarray*}
		d^{1,f}(\phi^{1,f}_{t_n}(x_n,\tfrac{1}{2}),(x_{n+1},\tfrac{1}{2}))&=& d^{1,f}((f(x_n),\tfrac{1}{2}),(x_{n+1},\tfrac{1}{2}))\\
		&=& \tfrac{1}{2}d(f(x_{n}),x_{n+1})+\tfrac{1}{2}d(f^{2}(x_{n}),f(x_{n+1})) \leq \delta.
	\end{eqnarray*}
	That is $((x_n,\tfrac{1}{2}),t_n)_{n\in \mathbb{Z}}$ is a $(\delta,\, 1)$-pseudo-orbit of $\phi^{1,f}$ with $x_0=z$. So, there are $(x,s)\in X^{1,f}$ and $\alpha\in \mathrm{Rep}$ such that 
	$$d^{1,f}(\phi^{1,f}_{\alpha(t)}(x,s),\phi^{1,f}_{t-n}(x_n,\tfrac{1}{2}))<\ep^{\prime},\quad \mbox{for}\ n\leq t<n+1\quad (n\in \mathbb{Z}).$$
	Now as $t=0$, we have $$d^{1,f}((x,s),(z,\tfrac{1}{2}))<\frac{1}{4},$$ so $\vert s-\frac{1}{2}\vert<\frac{1}{4}$. Moreover, since $d^{1,f}(\phi^{1,f}_{\alpha(t)}(x,s),\phi^{1,f}_{t}(z,\frac{1}{2}))<\ep^{\prime}<\frac{1}{4}$ for all $0\leq t<1$, it follows that $d^{1,f}((x,s+\alpha(1)),(z,\frac{3}{2}))<\frac{1}{4}$. Thus we obtain  $$\vert \tfrac{3}{2}-s-\alpha(1)\vert<\tfrac{1}{4}.$$
	Then $1\leq s+\alpha(1)<2$ and so $\phi^{1,f}_{\alpha(1)}(x,s)$ should be represented as $(f(x),s^{(1)})$ where $0\leq s^{(1)}<1$. Also we have $d^{1,f}(\phi^{1,f}_{\alpha(t)}(x,s),\phi^{1,f}_{t-1}(x_1,\frac{1}{2}))<\ep^{\prime}<\frac{1}{4}$ for all $1\leq t<2$. Thus $d^{1,f}((f(x),s^{(1)}+\alpha(2)-\alpha(1)),(x_1,\frac{3}{2}))<\frac{1}{4}$, therefore $\phi^{1,f}_{\alpha(2)}(x,s)$ should be represented as $(f^2(x),s^{(2)})$ where $0\leq s^{(2)}<1$. If we carry on in the same manner we will have that $\phi^{1,f}_{\alpha(n)}(x,s)$ should be represented as $(f^n(x),s^{(n)})$ where $0\leq s^{(n)}<1$ for each $n\in \mathbb{Z}$.
	For $t=n$, we have $$d^{1,f}(\phi^{1,f}_{n}(x,s),(x_n,\tfrac{1}{2}))=d^{1,f}((f^n(x),s^{(n)}),(x_n,\tfrac{1}{2}))<\ep^{\prime}.$$ If $f^n(x)=x_n$,  $d(f^n(x),x_n)<\ep$ is trivial. If $f^n(x)\neq x_n$, it follows that 
	$$\tfrac{1}{2}d(f^n(x),x_n)+\tfrac{1}{2}d(f^{n+1}(x),f(x_n))\leq d^{1,f}((f^n(x),s^{(n)}),(x_n,\tfrac{1}{2}))<\ep^{\prime}.$$
	Hence $d(f^n(x),x_n)<\ep^{\prime}$ or $d(f^{n+1}(x),f(x_n))<\ep^{\prime}.$ From the way we chose $\ep^{\prime}$ this implies that $d(f^{n}(x),x_n)<\ep$ for every $n\in \mathbb{Z}$. Therefore $z\in Sh(f)$.
	
	Conversely, let $z\in Sh(f)$ and $r\in [0,1]$. Given $\ep>0$, take $0<\ep^{\prime}<\ep$ so that $d(x,y)<\ep^{\prime}$ implies $d(f^{i}(x),f^{i}(y))<\tfrac{1}{2}\ep$ for $i=0,1,2$. Let $\delta$, with $0<\delta<\tfrac{1}{2}\ep^{\prime}$, from the definition of shadowable point for $f$ respect to $\ep^{\prime}$. Take $0<\delta^{\prime}<\min\{\tfrac{1}{4},\delta\}$ as in Lemma 2.5 in \cite{Thomas84} and $((x_k,s_k),(t_k))_{k\in\mathbb{Z}}$ a $(\delta^{\prime},2,4)$-pseudo orbit passing through $(z,r)$ for the suspension flow $\phi^{1,f}$ on $X^{1,f}$. Let $w_k=[s_k+t_k]$ denote the integer part of $s_k+t_k$. Hence
	$$d^{1,f}((f^{w_k}(x_k),s_k+t_k-w_k), (x_{k+1},s_{k+1}))<\delta^{\prime}\quad \mbox{for all}\ k\in \mathbb{Z}.$$
	Since $\delta^{\prime}<\tfrac{1}{4}$, by Lemma 2.4 in \cite{Thomas84}, we have that $\vert s_k+t_k-w_k-s_{k+1}\vert<\delta^{\prime}$ or $\vert 1+s_k+t_k-w_k-s_{k+1}\vert<\delta^{\prime}$ or $\vert 1+s_{k+1}+w_k-t_k-s_{k}\vert<\delta^{\prime}$. Now, let $n_k$ be a positive integer defined as follows 
	$$
	n_k = \left\{
	\begin{array}{cl}
	w_k &\mbox{if } \vert s_k+t_k-w_k-s_{k+1}\vert<\delta^{\prime},\\
	w_k-1 &\mbox{if } \vert 1+s_k+t_k-w_k-s_{k+1}\vert<\delta^{\prime},\\
	w_k+1 &\mbox{if } \vert 1+s_{k+1}+w_k-t_k-s_{k}\vert<\delta^{\prime}.
	\end{array}\right.
	$$
	Then by Lemma 2.5 in \cite{Thomas84} we obtain that $d(f^{n_k}(x_k),x_{k+1})<\delta$ for all $k\in \mathbb{Z}$. Define a sequence $(y_i)_{i\in\mathbb{Z}}$ in $X$ as follows:
	$$ y_i=f^{i-N_k}(x_k)\quad \mbox{for}\ N_k\leq i<N_{k+1},
	$$
	where $(N_k)_{k\in \mathbb{Z}}$ is the sequence of sums associated to $(n_k)_{k\in \mathbb{Z}}$. Obviously this sequence is a $\delta$-pseudo orbit of $f$ passing through $z$. Hence, there exists $x\in X$ such that $d(f^i(x),y_i)<\ep^{\prime}$ for every $i\in \mathbb{Z}$. Therefore we get 
	\begin{equation}\label{somb}
	d(f^{j+N_k}(x),f^{j}(x_k))<\ep^{\prime}\quad \mbox{for}\ 0\leq j<n_k\ (k\in \mathbb{Z}).
	\end{equation}
	Now, take the point $(x,t)\in X^{1,f}$ and define $\alpha:\mathbb{R}\rightarrow\mathbb{R}$ in the following way.
	$$\alpha(t)=\frac{s_{k+1}+n_k-s_k}{t_k}(t-T_k)+s_k+N_k-s_0\quad\mbox{for}\ T_k\leq t<T_{k+1},$$
	where $(T_k)_{k\in \mathbb{Z}}$ is the sequence of sums associated to $(t_k)_{k\in \mathbb{Z}}$. It is clear that $\alpha$ is continuous with $\alpha(0)=0$. Moreover, since $n_k\geq 1$ then $\alpha \in \mathrm{Rep}$. We claim that $\phi^{1,f}_{\mathbb{R}}(x,r)$ is an orbit on $X^{1,f}$ which $\ep$-traces $((x_k,s_k),(t_k))_{k\in\mathbb{Z}}$. Let $t\in \mathbb{R}$ and let $k\in\mathbb{Z}$ be such that $T_k\leq t<T_{k+1}$ we get
	\begin{eqnarray*}
		\vert \alpha(t)-s_k-N_k+s_0-(t-T_k)\vert &=&\left\vert \frac{s_{k+1}+n_k-s_k-t_k}{t_k}(t-T_k)\right\vert\\
		&=& \vert s_{k+1}+n_k-s_k-t_k\vert\left\vert\frac{t-T_k}{t_k}\right\vert.
	\end{eqnarray*}
	Since $\vert s_k+t_k-n_k-s_{k+1}\vert<\delta^{\prime}$ and $0\leq t-T_k<t_k$, we have 
	\begin{equation}
	\vert \alpha(t)-s_k-N_k+s_0-(t-T_k)\vert<\delta^{\prime}.
	\end{equation}
	Now if $j$ is a positive integer which makes $0\leq s_k+t-T_k-j<1$, then 
	$0\leq j\leq s_k+t_k\leq n_k+2$. So by (\ref{somb}) and the choice of $\ep^{\prime}$ we get $d(f^{j+N_k}(x),f^{j}(x_k))<\tfrac{1}{2}\ep$ for $0\leq j\leq n_k+2$. Finally, take
	\begin{eqnarray*}
		d^{1,f}(\phi^{1,f}_{\alpha(t)}(x,r),\phi^{1,f}_{t-T_k}(x_k,s_k))&=&d^{1,f}\left((f^{N_k}(x),r+\alpha(t)-N_k),(x_k,s_k+t-T_k)\right)
	\end{eqnarray*}
	\begin{eqnarray*}
		&=&d^{1,f}\left((f^{j+N_k}(x),r+\alpha(t)-N_k-j),(f^j(x_k),s_k+t-T_k-j)\right)\\
		&\leq & d^{1,f}\left((f^{j+N_k}(x),r+\alpha(t)-N_k-j),(f^{j+N_k}(x_k),s_k+t-T_k-j)\right)\\
		& &+d^{1,f}\left((f^{j+N_k}(x_k),s_k+t-T_k-j),(f^{j}(x_k),s_k+t-T_k-j)\right)\\
		&\leq & \vert r+\alpha(t)-N_k-j-(s_k+t-T_k-j)\vert+(s_k+t-T_k-j)d(f^{j+N_k+1}(x),f^{j+1}(x_k))\\
		& & +(1-s_k-t+T_k+j)d(f^{j+N_k}(x),f^j(x_k))\\
		&< & \delta^{\prime}+\tfrac{1}{2}(1-(s_k+t-T_k-j))\ep+\tfrac{1}{2}(s_k+t-T_k-j)\ep\leq \tfrac{1}{2}\ep+\tfrac{1}{2}\ep=\ep.
	\end{eqnarray*}
	Hence $(x,r)\in Sh(\phi^{1,f})$.
\end{proof}

\begin{proof}[Proof of Theorem \ref{teo2.5}]

Given $\ep>0$ we denote by $Sh(\phi\,,\ep)$ the set of points $p\in X$ such that the flow has the {\em POTP} through a subset $\{p\}$ (see Definition \ref{3.6}). Note that 
\begin{equation}\label{5}
Sh(\phi)=\bigcap_{\ep>0}Sh(\phi,\,\ep).
\end{equation}
Let $\ep_0>0$. We can suppose $Sh(\phi)\neq \emptyset$. Given $x\in Sh(\phi)$, since $x\in Sh(\phi,\,\frac{\ep_0}{2})$, by Lemma \ref{lemma3.7} there is $\delta_{x,\,\ep_0} > 0$ such that every $(\delta_{x,\,\ep_0},\,1)$-pseudo-orbit passing through $B[x,\,\delta_{x,\,\ep_0} ]$ can be $\ep_0$-shadowed. It follows that $B(x,\,\delta_{x,\,\ep_0} )\subset Sh(\phi,\,\ep_0)$. So, for every $\ep_0>0$
$$Sh(\phi,\,\ep_0)=A(\ep_0)\cup B(\ep_0),$$
where $A(\ep_0)=\displaystyle{\bigcup_{x\in Sh(\phi)}}B(x,\,\delta_{x,\,\ep_0} )$ and $B(\ep_0)=Sh(\phi,\,\,\ep_0)\setminus A(\ep_0)$. Moreover, note that $A(\ep_0)$ is open and $B(\ep_0)\subset Sh(\phi,\,\ep_0)\setminus Sh(\phi)$. By (\ref{5}) we have $\displaystyle{\bigcap_{\ep_0>0}}B(\ep_0)=\emptyset$ and 

$$Sh(\phi)=\bigcap_{n\in\mathbb{N}}Sh(\phi,\,\tfrac{1}{n})=\bigcap_{n\in\mathbb{N}}A(\tfrac{1}{n})\cup B(\tfrac{1}{n})=\bigcap_{n\in\mathbb{N}}A(\tfrac{1}{n}).$$
That is, $Sh(\phi)$ is a $G_{\delta}$ set of $X$.
\end{proof}

\begin{proof}[Proof of Corollary \ref{coroG}]
By Theorem \ref{teo2.5}, if $\phi^{1,f}$ is the suspension of $f$ under the constant function 1, then $Sh(\phi^{1,f})$ is a $G_{\delta}$ set of $X^{1,f}$. Thus there is a sequence $(A_n)_{n\in \mathbb{N}}$ of open sets in $X^{1,f}$ with the following property $Sh(\phi^{1,f})=\bigcap_{n=1}^{\infty}A_n$. So, by Theorem \ref{suspe1} we have $$p(Sh(f)\times [0,1])=\bigcap_{n=1}^{\infty}A_n,$$
where $p:X\times [0,1]\rightarrow X^{1,f}$ is the quotient map of $X^{1,f}$. Moreover, since $Sh(f)$ is invariant with respect to $f$ we obtain that $Sh(f)\times [0,1]=p^{-1}(p(Sh(f)\times [0,1]))$. Hence \begin{equation}\label{Gdelta}
Sh(f)\times [0,1]=\bigcap_{n=1}^{\infty}p^{-1}(A_n).
\end{equation}
Next, given $z\in Sh(f)$, by ($\ref{Gdelta}$) we have that $z\times [0,1]\subset p^{-1}(A_n)$ for every $n\in\mathbb{N}$. Since $p^{-1}(A_n)$ are open sets, there exists $\varepsilon_{z,n}\leq \frac{1}{n}$ such that $B(z,\varepsilon_{z,n})\times [0,1]\subset p^{-1}(A_n)$. Finally, if $V_n=\bigcup_{z\in Sh(f)}B(z,\varepsilon_{z,n})$ it follows that
$$Sh(f)=\bigcap_{n=1}^{\infty}V_n.$$
This concludes the proof.
\end{proof}

\begin{proof}[Proof of Theorem \ref{teo2.9}]
Suppose that $Sh^+(\phi)\neq\emptyset$. Let $p\in Sh^+(\phi)$ and $q\in X$. Let $\ep > 0$ and take $\delta > 0$ from the forward shadowableness of $p$. Let $(x_i,\, t_i)_{i=0}^{\infty}$ a  forward $(\delta,\,1)$-pseudo-orbit passing through $q$ and let $(y_i,\,s_i)_{i=0}^m$ a $(\delta,\,1)$-chain such that $y_0 = p$ and $y_m= q$. We have that the sequence of pairs $(z_j,\,r_j)_{j=0}^{\infty}$ given by 

\[
  (z_j,\,r_j) =
 \begin{cases} (y_j,\,s_j)\quad \quad \ \mbox{ if } 0\leq j< m,\\
               (x_{j - m},\, t_{j - m})\, \mbox{ if }  j\geq m.
  \end{cases}
\] 
is a forward $(\delta,\,1)$-pseudo-orbit passing through $p$. We set 
 
   \[\hat{r}_i =
      \begin{cases}
        \displaystyle{\sum_{j=0}^{i-1}} r_j & i > 0,\\
		    0				             & i = 0,
      \end{cases}
    \]
and
    
    \[\hat{t}_i =
      \begin{cases}
        \displaystyle{\sum_{j=0}^{i-1}} t_j & i > 0,\\
		    0				             & i = 0.
      \end{cases}
    \]
Hence there are $h\in \mathrm{Rep}$ and a point $y\in X$ such that $d(p\star t,\,\phi_{h(t)}(y))\leq\ep$, for $t\in[0,\,\infty)$. Let $g(t) = h(t + \hat{r}_m) - h(\hat{r}_m)$.  

Note that 
$$\hat{r}_{m + k} - \hat{r}_m = \sum_{j = 0}^{m + k -1}r_j-\sum_{j=0}^{k-1}r_j = \sum_{j=m}^{m+k-1}t_j = \hat{t}_k.$$
So, if $\hat{t_k}\leq t < \hat{t}_{k+1}$ with $k\geq 0$, then $\hat{r}_{m+k}\leq t + \hat{r}_m < \hat{r}_{k + m +1}$ and therefore

\begin{align*}
d(q\star t,\, \phi_{g(t)}(\phi_{h(\hat{r}_m)}(y)))&=d(\phi_{t-\hat{t}_k}(x_k),\,\phi_{h(t+\hat{r}_m) - h(\hat{r}_m)}(\phi_{h(\hat{r}_m)}(y)))\\
&= d(\phi_{t + \hat{r}_{m}-\hat{r}_{m+k}}(z_{m+k}),\,\phi_{h(t+\hat{r}_m)}(y))\\
&\leq\ep.
\end{align*}
We have shown that the given forward $(\delta,\,1)$-pseudo-orbit passing through $q$ can be $\ep$-shadowed by the point $\phi_{h(\hat{t}_m)}(y)$ and as this $\ep$ was arbitrary we conclude that $q$ forward shadowable. That is $q\in Sh^+(\phi)$. This completes the proof.
\end{proof}

\begin{proof}[Proof of Theorem \ref{teo2.3.6}] 
As $X$ is a Baire Space, it is enough to proof that for any pairs of open sets $U$ and $V$ of $X$, there exists a non-negative $T$ with $\phi_T(U)\cap V\neq\emptyset$. By hypothesis there exists at least one shadowable point $x$. Choose  two points $p\in U$ and $q\in V$ and let $\epsilon>0$ such that $B(p,\,\ep)\subseteq U$ and $B(q,\,\ep)\subseteq V$. Let $\delta> 0$ from the shadowableness of $x$ with respect to $\ep$. By chain transitivity there exists a $(\delta,\,1)$-chain $(x_i,\,u_i)_{i=0}^m$ from $p$ to $x$ and a $(\delta,\,1)$-chain $(y_j,\,r_j)_{j=0}^n$ from $x$ to $q$. Define a $(\delta,\,1)$-pseudo-orbit $(z_i,\,t_i)_{i\in\mathbb{Z}}$ passing through $x$ as follows:

\[
(z_k,\,t_k) = \begin{cases} 
(\phi_{k+m}(x_0),\,1) &\mbox{if } k< -m,\\
(x_{m+k},\,u_{m+k}) & \mbox{if } -m\leq k< 0,\\
(y_k,\,r_k) &\mbox{if } 0\leq k\leq n  -1,\\
  (\phi_{k-n}(y_n),\,1)&\mbox{if } k\geq n. 
\end{cases}
\]
Then there are $y\in X$ and $h\in\mathrm{Rep}$ such that 
\[d(\phi_{t-s_k}(x),\,\phi_{h(t)}(y))\leq \ep,\ \forall\,s_k\leq t<s_{k+1},\]
where $(s_k)_{k\in \mathbb{Z}}$ is the sequence of sums associated to $(t_k)_{k\in \mathbb{Z}}$. In particular, we have 
\[
d(p,\,\phi_{h(s_{-m})}(y))\leq\ep
\]
and
\[
d(q,\,\phi_{h(s_{n})}(y))\leq\ep.
\] 
Set $T = -h(s_{-m}) +h(s_n)$ which is nonnegative since $h\in\mathrm{Rep}$. Then $\phi_{h(s_n)}(y)\in \phi_T(U)\cap V$. This concludes the proof.
\end{proof}


\end{document}